\documentclass{amsart}
\usepackage{graphicx}
\usepackage{mathtools}
\usepackage{hyperref}
\usepackage[table]{xcolor}
\usepackage{graphicx}
\usepackage{multirow}
\usepackage{subfigure} 
\usepackage{amsthm}
\usepackage{amsmath,amssymb,amsbsy}
\usepackage{lipsum}
\usepackage[all,cmtip]{xy}

\numberwithin{figure}{section}

\newcommand{\sL}{\mathcal{L}}

\newcommand{\sP}{\mathcal{P}}
\newcommand{\sR}{\mathcal{R}}
\newcommand{\sU}{\mathcal{U}}

\newtheorem{defi}{Definition}[section]
\newtheorem{thm}{Theorem}[section]
\newtheorem{cor}{Corollary}[section]
\newtheorem{lemma}{Lemma}[section]
\newtheorem{eg}{Example}[section]
\newtheorem{remark}{Remark}[section]
\DeclarePairedDelimiter\floor{\lfloor}{\rfloor}
\begin{document}

\title{$\psi$ class intersections on Hassett Spaces for genus $0$ with all weights $\frac{1}{2}$}

\author{Nand Sharma}
\address{Department of Mathematics, Colorado State University, CO, USA}
\email{sharma@math.colostate.edu}
\keywords{$\psi$ classes, Hassett Spaces}
\subjclass[2000]{14Q99}

\begin{abstract}
Hassett spaces are moduli spaces of weighted stable pointed curves. In this work, we consider such spaces of curves of genus $0$ with weights all $\frac{1}{2}$. These spaces are interesting as they are isomorphic to $\overline{M}_{0,n}$ but have different universal families and different intersection theory. We develop a closed formula for  intersections of $\psi$-classes on such spaces. In our main result, we encode the formula for top intersections in a generating function obtained by applying a differential operator to the Witten-potential. 
\end{abstract}

\maketitle

\section{Introduction}
The moduli space of algebraic curves of genus $0$ with $n$ marked points, $ \overline{M}_{0, n} $ (with the Deligne-Mumford compactification \cite{Deligne}) has been an important topic of research in algebraic geometry. These spaces provide an algebro-geometric tool to study how pointed rational curves vary in families, and are of fundamental importance in areas like Gromov-Witten theory and topological quantum field theories \cite{invitation}.

In \cite{HASSETT2003316}, Hassett constructed a new class of modular compactifications $ \overline{M}_{0, \mathcal{A}} $ of the moduli space ${M}_{0, n} $ of smooth curves with n marked points parameterized by an input datum $\mathcal{A}$, consisting of a collection $\mathcal{A} = (a_1 , \ldots , a_n )$ of weights $a_i \in \Bbb Q \cap (0, 1]$ such that $ a_1 + \ldots + a_n > 2$. We call these spaces $ \overline{M}_{0, \mathcal{A}} $ the Hassett spaces of rational curves.

A lot of work is being done on Hassett spaces including developing their tautological intersection theory and weighted Gromov-Witten theory, e.g.\ in \cite{alexeev_guy_2008}, \cite{weightedGW} and \cite{losev2000}.In this work, we contribute to the tautological intersection theory of a special case of such spaces- Hassett spaces of rational curves with weights all $\frac{1}{2}$, denoted $ \overline{M}_{0,\left( \frac{1}{2}\right)^n }$. These spaces provide for interesting spaces for combinatorial results in its intersection theory because of the symmetry of weights and its connections with intersection theory for $\overline{M}_{0,n}$.These spaces are interesting also because they are fine moduli spaces, are isomorphic to $\overline{M}_{0,n}$, but have different universal families and different intersection theory. Exploring these differences and developing some results in its tautological intersection theory is the contribution of this work.
For this work, the following notations are used: a $\psi$ class on $\overline{M}_{0,n}$ is denoted as $\psi_i$; a $\psi$ class on $ \overline{M}_{0,\left( \frac{1}{2}\right)^n } $ is denoted as $\bar \psi_i$, and the pullback of a $\psi$ class under the reduction morphism from $\overline{M}_{0,n}$ to $ \overline{M}_{0,\left( \frac{1}{2}\right)^n } $ is denoted $\hat \psi_i$.
     
In our first result \ref{arbintThm}, we develop a closed formula (\ref{arbintThmEq}) for the monomials in $\hat\psi$ classes in terms of cycles on $\overline{M}_{0,n}$. This closed formula is derived using the relation (\ref{psihatpsiCorEq}) between the $\hat \psi$ classes and $\psi$ classes on $\overline{M}_{0,n}$, in which $\psi_i$ is corrected by all boundary divisors where the $i$-th mark is on a twig that gets contracted when pushed forward to $ \overline{M}_{0,\left( \frac{1}{2}\right)^n } $. The proof uses this relation to obtain the $\hat \psi$ monomials as monomials in $\psi$ classes and boundary divisors on $\overline{M}_{0,n}$. So, the summands in the resulting expansion correspond to modified $\psi$ monomials on certain boundary strata on $\overline{M}_{0,n}$ that are the intersections of these boundary divisors.The dual graphs of these strata are all `forked' graphs - graphs with a `central' node and some `forks', e.g.\ figure (\ref{fig:image13}).We then establish a bijection between summands in the expansion corresponding to these graphs that we call `$\mathcal{P}$'-graphs and the unordered partitions of $[n]$, such that cardinality of each subset in the partition is either $2$ or $1$. The resulting formula (\ref{arbintThmEq}) has the pullback of monomials in $\bar \psi$ classes on $ \overline{M}_{0,\left( \frac{1}{2}\right)^n } $ as a sum of the intersections of monomials in $\psi$ classes and boundary strata corresponding to $\mathcal{P}$-graphs on $\overline{M}_{0,n}$. Then we derive two corollaries (\ref{mainThm} and \ref{mainThm1}) of this result to calculate the top intersections. These give the top intersections of $\hat \psi$ classes as a sum of top intersections of $\psi$ classes on $\overline{M}_{0,n}, \overline{M}_{0,n-1}, \overline{M}_{0,n-2}, \ldots $ with some multiplicities. We point out here that our corollaries (\ref{mainThm} and \ref{mainThm1}) can also be deduced from theorem $7.9$ in \cite{alexeev_guy_2008}. For our work, we develop specific and explicit closed formulas for our special case of all weights $\frac{1}{2}$ and base our combinatorial analysis closely on the structure of dual graphs.

The main theorem \ref{GenPot} of this work encodes the closed formula (\ref{mainThm}) for top intersections in a generating function $ G(\mathbf{t}) $ obtained by applying a differential operator to the Witten-potential $F(\mathbf{t}) $ \cite{kock}. This operator $\hat{\mathcal{L}}$ takes the form of an exponential partial differential operator and provides a very nice compact way to describe these top intersections. 
For this, we define `$\mathcal{P}_{k}$-graphs' that are obtained by replacing the $i$-th mark with $k_i$ on a $\mathcal{P}$-graph, where $k_i$ is the exponent of $\hat{\psi_i}$ in the $\hat \psi$ monomial (\ref{pkDef}). As expected, there is a surjection between $\mathcal{P}$-graphs and $\mathcal{P}_{k}$-graphs. Corresponding to these `$\mathcal{P}_{k}$-graphs', we write a new version of our closed formula in terms of these graphs (\ref{pkClosedformula}). Then we show a bijection between the summands in this formula and the summands in the coefficient of the appropriate term in $\hat{\mathcal{L}}  \left( F(\mathbf{t}) \right)  $. The resulting coefficient, as a sum of all these summands, corresponds to the top intersections of $\hat\psi$ classes.

The paper is organized as follows. In section \ref{secbkGrnd}, we give the background required for this work which consists of a brief introduction to $\overline{M}_{0,n}$, $\psi$ classes and Hassett Spaces, with some relevant theorems and lemmas on these topics. In section \ref{secClosed}, we prove our first result which gives the closed formula for the intersections of $\hat \psi$ classes. In section \ref{numer}, we give the results for top intersections, and encode the formula for top intersections in the generating function that we obtain by applying a partial differential operator to the Witten-potential.

\section{Background}\label{secbkGrnd}
For background on this work, the author has mainly used \cite{invitation}, \cite{renzo1}, \cite{kock}, \cite{mod} and introductory sections of \cite{AYALA2006307}. Here we will recall some selected facts we explicitly use in this work.
\subsection{$\overline{M}_{0,n}$}
We denote by $\overline{M}_{0,n}$ the moduli space of stable, $n$ pointed rational curves, with at worst nodal singularities. The {\bf boundary} of $\overline{M}_{0,n}$ is defined to be the complement of $M_{0,n}$ in $\overline{M}_{0,n}$. It consists of all points parameterizing nodal stable curves.

Given a rational, stable $n$-pointed curve $(C,p_1, \ldots, p_n)$, its {\bf dual graph}  is defined to have:
\begin{itemize}
\item a vertex for each irreducible component of $C$;
\item an edge for each node of $C$, joining the appropriate vertices;
\item a labeled half edge for each marked point, emanating from the appropriate vertex.
\end{itemize} 

Figure (\ref{fig:image14}) below gives an example of the dual graphs of some strata in $\overline{M}_{0,5}$. 
\begin{figure}[!htb]
\centering
  \includegraphics[width=4in]{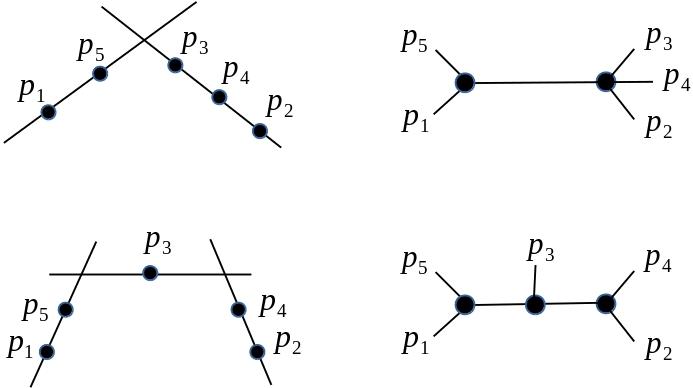}
\centering
        
  \label{fig:image14}
\caption{On the left are the boundary strata of $\overline{M}_{0,5}$, and the corresponding dual graphs are on the right. }
\end{figure}

The {\bf closures} of the codimension $1$ boundary strata of $\overline{M}_{0,n}$ are called the \textbf{irreducible boundary divisors};
they are in one-to-one correspondence with all ways of
partitioning $[n]=A\cup A^c$ with the cardinality of both $ A$ and  $A^c$ strictly greater than 1. We denote $D(A)= D(A^c)$ the divisor corresponding to the partition $\left\lbrace A,A^c\right\rbrace $. 
\subsection{$\psi$ classes}
For $i= 1, \ldots, n$, we define  the class $\psi_i \in A^1(\overline{M}_{0,n})$. 
\noindent Let $\textbf{L}_i \to \overline{M}_{0,n}$ be a line bundle whose fiber over each point $(C, p_1, \ldots, p_n)$ is canonically identified with $T^\ast_{p_i}(C)$. The line bundle $\bf{L}_i$ is called the $i$-th {\bf cotangent} (or {\bf tautological}) line bundle. Then
\begin{equation}
\psi_i := c_1(\bf{L}_i )
\end{equation} 
where $c_1$ is the first Chern class of the line bundle $\bf{L}_i$.

Some properties of $\psi $ classes on $\overline{M}_{0,n}$ that we use are the following lemmas. The interested reader can find their proofs in \cite{kock}. Here $[n]=I\cup I^c$ with the cardinality of both $ I$ and  $I^c$ strictly greater than $1$.
\begin{lemma}
Consider  the gluing morphism $
gl_I: \overline{M}_{0, I \cup \star} \times \overline{M}_{0, I^c \cup \bullet} \to  \overline{M}_{0, n} 
$. Assume that $i\in I$ and denote by $\pi_1:\overline{M}_{0, I \cup \star} \times \overline{M}_{0, I^c \cup \bullet} \to  \overline{M}_{0, I\cup \star}$ the first projection.
Then:
\begin{equation}
gl_I^\ast (\psi_i) =  \pi_1^\ast(\psi_i).
\end{equation}
\end{lemma}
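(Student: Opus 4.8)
The plan is to prove the identity first at the level of the tautological line bundles and only then take first Chern classes. Recall that $\psi_i = c_1(\mathbf{L}_i)$, where the fiber of $\mathbf{L}_i$ over a moduli point $(C, p_1, \ldots, p_n)$ is the cotangent line $T^\ast_{p_i}(C)$. Since first Chern classes commute with pullback, it suffices to produce a canonical isomorphism
\[
gl_I^\ast(\mathbf{L}_i) \cong \pi_1^\ast(\mathbf{L}_i)
\]
of line bundles on $\overline{M}_{0, I\cup\star} \times \overline{M}_{0, I^c\cup\bullet}$; applying $c_1$ to both sides then yields $gl_I^\ast(\psi_i) = \pi_1^\ast(\psi_i)$.

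First I would read off $gl_I$ fiberwise. A point of the source is a pair $\bigl((C_1, (p_j)_{j\in I}, \star),\, (C_2, (p_j)_{j\in I^c}, \bullet)\bigr)$, and $gl_I$ sends it to the nodal curve $C = C_1 \cup C_2$ formed by identifying $\star \in C_1$ with $\bullet \in C_2$. Because $i \in I$, the marked point $p_i$ lies on the component $C_1$, and since markings on a stable curve are disjoint from its nodes, $p_i$ is a smooth point of $C$ lying away from the newly created node. Hence the cotangent line of $C$ at $p_i$ sees only $C_1$, giving $T^\ast_{p_i}(C) = T^\ast_{p_i}(C_1)$. The right-hand side is exactly the fiber of $\pi_1^\ast(\mathbf{L}_i)$ at the same point, so the two line bundles agree fiber by fiber.

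To promote this pointwise comparison to a genuine isomorphism of line bundles, I would phrase it through the universal curves, writing $\mathbf{L}_i$ as the restriction of the relative cotangent sheaf along the $i$-th tautological section $\sigma_i$. Over the image of $gl_I$ the universal family splits into its two components, and near $\sigma_i$ (with $i \in I$) it coincides, together with its relative cotangent sheaf, with the $\pi_1$-pullback of the universal family over $\overline{M}_{0, I\cup\star}$; restricting to $\sigma_i$ then globalizes the identity $T^\ast_{p_i}(C) = T^\ast_{p_i}(C_1)$ into the required isomorphism. The main obstacle is precisely this last globalization step: the fiberwise identification is geometrically transparent, but one must check that it is natural in families, so that it defines a morphism of line bundles and not merely a pointwise bijection. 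The crux is that $p_i$ never collides with the node created by $gl_I$, so the node contributes no correction to the relative cotangent sheaf along $\sigma_i$, and the comparison map is the naive isomorphism.
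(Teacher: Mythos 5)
Your proof is correct. The paper does not actually prove this lemma --- it defers to \cite{kock} --- but your argument (identify $gl_I^\ast(\mathbf{L}_i)$ with $\pi_1^\ast(\mathbf{L}_i)$ by restricting the relative cotangent sheaf of the glued universal family along the section $\sigma_i$, using that $p_i$ stays away from the new node, then take $c_1$) is exactly the standard one found there, and it mirrors the strategy the paper itself uses to prove Lemma \ref{redpsi}, where the analogous comparison map fails to be an isomorphism precisely along boundary divisors and so picks up correction terms; here the section avoids the node, so no correction appears.
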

\begin{lemma}\label{comparison}
Consider the forgetful morphism $\pi_{n+1}: \overline{M}_{0,n+1} \to \overline{M}_{0,n}$. Then, for every $i = 1, \ldots, n$,
\begin{equation}
\psi_i  = \pi_{n+1}^\ast(\psi_i) + D(\{i, n+1\}).
\end{equation}
\end{lemma}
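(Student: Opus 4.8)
The plan is to prove the identity at the level of line bundles, namely that $\mathbf{L}_i \cong \pi_{n+1}^\ast(\mathbf{L}_i) \otimes \sO(D(\{i,n+1\}))$ on $\overline{M}_{0,n+1}$, and then pass to first Chern classes. Write $\pi := \pi_{n+1}$ and recall that $\pi$ realizes $\overline{M}_{0,n+1}$ as the universal curve over $\overline{M}_{0,n}$: the fiber over a moduli point $[C,p_1,\dots,p_n]$ is the (stabilized) curve $C$, the marked points determine $n$ sections $\sigma_1,\dots,\sigma_n$, and the image of $\sigma_i$ is exactly the boundary divisor $D(\{i,n+1\})$, i.e.\ the locus where $p_{n+1}$ has bubbled off onto a rational tail together with $p_i$.

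First I would produce a canonical morphism of line bundles $\phi\colon \pi^\ast(\mathbf{L}_i)\to \mathbf{L}_i$. Over a point $[\tilde C,p_1,\dots,p_{n+1}]$ of $\overline{M}_{0,n+1}$, let $c\colon \tilde C\to C$ be the contraction that forgets $p_{n+1}$ and stabilizes; the $i$-th marked point of $C$ is $c(p_i)$, so the fiber of $\pi^\ast(\mathbf{L}_i)$ is $T^\ast_{c(p_i)}(C)$. The differential of $c$ at $p_i$ dualizes to a linear map $T^\ast_{c(p_i)}(C)\to T^\ast_{p_i}(\tilde C)$, that is, a fiberwise map from $\pi^\ast(\mathbf{L}_i)$ to $\mathbf{L}_i$; since the universal contraction is a morphism, these assemble into a global bundle map $\phi$.

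Next I would locate the degeneracy locus of $\phi$. Away from $D(\{i,n+1\})$ the point $p_{n+1}$ does not destabilize the component carrying $p_i$, so $c$ is a local isomorphism near $p_i$ and $\phi$ is an isomorphism there. Over $D(\{i,n+1\})$ the point $p_i$ sits on the contracted rational tail, the differential of $c$ kills the tangent direction along that tail, and $\phi$ vanishes. The crux --- and the step I expect to be the main obstacle --- is the local computation showing that $\phi$ vanishes to order \emph{exactly} one along $D(\{i,n+1\})$: working in a local smoothing $xy=t$ of the relevant node, where $t$ is a local equation for $D(\{i,n+1\})$, one checks that $c$ scales the cotangent generator at $p_i$ by a unit times $t$. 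This identifies $\phi$ with the composite $\pi^\ast(\mathbf{L}_i)\cong \mathbf{L}_i\otimes\sO(-D(\{i,n+1\}))\hookrightarrow \mathbf{L}_i$, hence with the claimed isomorphism after twisting by $\sO(D(\{i,n+1\}))$.

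Finally, taking $c_1$ of $\mathbf{L}_i\cong \pi^\ast(\mathbf{L}_i)\otimes\sO(D(\{i,n+1\}))$ and using $\psi_i=c_1(\mathbf{L}_i)$ together with $\pi^\ast\psi_i=c_1(\pi^\ast\mathbf{L}_i)$ yields $\psi_i=\pi^\ast(\psi_i)+D(\{i,n+1\})$, which is the assertion. As an alternative to the explicit multiplicity computation, one can instead restrict both sides to the section $\sigma_i$ (whose image is $D(\{i,n+1\})$) and invoke the gluing lemma above, together with $\overline{M}_{0,3}$ being a point, to pin down the coefficient of $D(\{i,n+1\})$ as $1$; the direct degeneration analysis, however, is the most transparent route.
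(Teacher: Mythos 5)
Your proof is correct and is the standard argument; the paper itself does not prove this lemma (it defers to \cite{kock}), but your line-bundle comparison --- a natural map $\pi_{n+1}^\ast(\mathbf{L}_i)\to\mathbf{L}_i$ dual to the differential of the stabilizing contraction, shown to be an isomorphism away from $D(\{i,n+1\})$ and to have a simple zero along it, followed by taking $c_1$ --- is precisely the technique the paper uses in its proof of the analogous reduction-morphism statement, Lemma \ref{redpsi}. The key step you correctly flag, the order-one vanishing via the local model $xy=t$ (or alternatively pinning down the coefficient by restricting to the section $\sigma_i$), is the same ``easy local calculation'' invoked there, so nothing further is needed.
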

\begin{lemma}
For any choice of $i, j, k$ distinct, we have the following equation in $A^1(\overline{M}_{0,n})$:
\begin{equation}
\psi_i = \sum_{i\in I, \  j,k \not\in I} D(I).
\end{equation}
\end{lemma}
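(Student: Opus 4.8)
The plan is to prove the identity by induction on the number of marked points $n$, using the comparison Lemma \ref{comparison} for the forgetful morphism $\pi_{n+1}\colon \overline{M}_{0,n+1}\to\overline{M}_{0,n}$ as the main engine. For the base case $n=3$ the space $\overline{M}_{0,3}$ is a point, so $A^1(\overline{M}_{0,3})=0$, we have $\psi_i=0$, and the right-hand sum is empty (no partition $[3]=I\cup I^c$ with both parts of cardinality $\geq 2$ exists); both sides vanish. As a nontrivial sanity check I would also verify $n=4$ directly on $\overline{M}_{0,4}\cong\mathbb{P}^1$: taking $(i,j,k)=(1,2,3)$, the only admissible index set is $I=\{1,4\}$, so the formula reads $\psi_1=D(\{1,4\})$, which holds since $\psi_1$ has degree $1$ and every boundary point represents the same class on $\mathbb{P}^1$.

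For the inductive step, assume the statement on $\overline{M}_{0,n}$. Given distinct marks $i,j,k$ on $\overline{M}_{0,n+1}$, since $n+1\geq 4$ there is a mark outside $\{i,j,k\}$, and after relabeling I may assume the forgotten point is $n+1$ with $i,j,k\in[n]$. The comparison Lemma \ref{comparison} then gives
\[
\psi_i=\pi_{n+1}^\ast(\psi_i)+D(\{i,n+1\})
\]
in $A^1(\overline{M}_{0,n+1})$. Substituting the inductive hypothesis for $\psi_i$ on $\overline{M}_{0,n}$ and pulling back term by term, the one geometric input I need is the standard pullback formula for boundary divisors under the forgetful morphism,
\[
\pi_{n+1}^\ast\, D(I)=D(I)+D(I\cup\{n+1\}),
\]
which records that the forgotten point may land on either component of a nodal curve parameterized by $D(I)$ (see \cite{kock}).

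Applying this to each summand $D(I)$ with $i\in I\subseteq[n]$, $j,k\notin I$, $|I|\geq 2$, I would sort the resulting divisors on $\overline{M}_{0,n+1}$ by whether $n+1$ lies in the index set. The sets $I'$ with $n+1\notin I'$ reproduce exactly the terms $D(I)$ for admissible $I\subseteq[n]$, while the sets $I'=I\cup\{n+1\}$ with $|I|\geq 2$ account for all admissible index sets containing $n+1$ \emph{except} $\{i,n+1\}$, which corresponds to the inadmissible case $I=\{i\}$ on $\overline{M}_{0,n}$. That missing term is supplied precisely by the correction $D(\{i,n+1\})$ coming from the comparison lemma. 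Matching the two descriptions shows that the corrected, pulled-back expression equals $\sum_{i\in I',\, j,k\notin I'}D(I')$ over all admissible $I'\subseteq[n+1]$, which closes the induction.

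I expect the main obstacle to be the combinatorial bookkeeping in this final matching rather than any deep geometry: one must carefully track the admissibility constraints $|I'|\geq 2$ and $|I'^c|\geq 2$, and confirm that the boundary case $|I|=1$ is contributed by exactly one term, so that every admissible divisor on $\overline{M}_{0,n+1}$ appears with coefficient $1$ and none is double counted. The pullback formula for $D(I)$ is the single nontrivial external ingredient; the remainder of the argument is a reorganization of the index set.
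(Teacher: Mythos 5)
The paper does not actually prove this lemma: it is listed among the background facts on $\psi$ classes whose proofs are deferred to \cite{kock}, and your induction via Lemma \ref{comparison} together with the pullback formula $\pi_{n+1}^\ast D(I)=D(I)+D(I\cup\{n+1\})$ is precisely the standard argument given there, so your proposal is correct and takes essentially the same route as the paper's cited source. The bookkeeping you flag as the main risk does close up cleanly: since $j,k\notin I$ forces $|I|\le n-2$, every set appearing in Case A is admissible on $\overline{M}_{0,n}$, and the single inadmissible case $I=\{i\}$ is exactly the correction term $D(\{i,n+1\})$.
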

\begin{lemma}[String Equation]\label{string}
Consider the forgetful morphism $\pi_{n+1}: \overline{M}_{0,n+1} \to \overline{M}_{0,n}$. Then
\begin{equation}
\pi_{n+1 \ast}\left(  \prod_{i=1}^n \psi_i^{k_i} \right)=
\sum_{j |k_j\not=0} \psi_j^{k_j-1}  \prod_{i\not=j} \psi_i^{k_i}.
\end{equation}
\end{lemma}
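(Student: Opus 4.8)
The plan is to reduce the String Equation to the comparison formula of Lemma \ref{comparison} together with the projection formula for $\pi_{n+1}$. Throughout, write $D_i := D(\{i,n+1\})$ for the boundary divisor on $\overline{M}_{0,n+1}$ parametrizing curves whose marks $i$ and $n+1$ sit on a two-pointed rational tail, and note that on the left-hand side the classes $\psi_i$ live upstairs on $\overline{M}_{0,n+1}$, while on the right-hand side they live on $\overline{M}_{0,n}$. By Lemma \ref{comparison} we have $\psi_i = \pi_{n+1}^\ast\psi_i + D_i$ upstairs, so the whole argument amounts to substituting this into $\prod_{i=1}^n \psi_i^{k_i}$, expanding, and applying $\pi_{n+1\ast}$ term by term.

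First I would assemble the four geometric facts that force the expansion to collapse. (i) $\psi_i \cdot D_i = 0$: restricting along the gluing morphism $gl_{\{i,n+1\}}$, the factor carrying the $i$-th mark is $\overline{M}_{0,\{i,n+1,\star\}}\cong\overline{M}_{0,3}$, a point, on which $\psi_i$ vanishes; the gluing-pullback lemma stated above gives $gl_{\{i,n+1\}}^\ast\psi_i = 0$, and the projection formula then yields $\psi_i\cdot D_i = gl_{\{i,n+1\}\ast}\big(gl_{\{i,n+1\}}^\ast\psi_i\big)=0$. (ii) $D_i\cdot D_j = 0$ for $i\neq j$, since a genus-$0$ curve cannot carry $\{i,n+1\}$ and $\{j,n+1\}$ on two disjoint tails, so these divisors do not meet. (iii) $\pi_{n+1\ast}(1)=0$, because $\pi_{n+1}$ has relative dimension $1$, so the pushforward of a codimension-$0$ class lands in codimension $-1$ and hence vanishes. (iv) $\pi_{n+1\ast}(D_i)=[\overline{M}_{0,n}]$, because $D_i$ is the image of the canonical stabilizing section $\sigma_i$ and $\pi_{n+1}$ restricts to an isomorphism $D_i\cong\overline{M}_{0,n}$.

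The key algebraic step is the one-variable identity $\psi_i^{k} = (\pi_{n+1}^\ast\psi_i)^{k} + D_i\,(\pi_{n+1}^\ast\psi_i)^{k-1}$ for $k\geq 1$. This follows by writing $\psi_i^{k}-(\pi_{n+1}^\ast\psi_i)^{k} = D_i\sum_{a=0}^{k-1}\psi_i^{a}(\pi_{n+1}^\ast\psi_i)^{k-1-a}$ and using (i) to kill every summand with $a\geq 1$, leaving only $D_i(\pi_{n+1}^\ast\psi_i)^{k-1}$. Taking the product over all $i$ with $k_i\geq 1$ and invoking (ii) to discard every term carrying two distinct boundary factors, I obtain $\prod_i\psi_i^{k_i} = \pi_{n+1}^\ast\!\big(\prod_i\psi_i^{k_i}\big) + \sum_{j:\,k_j\neq 0} D_j\cdot\pi_{n+1}^\ast\!\big(\psi_j^{k_j-1}\prod_{i\neq j}\psi_i^{k_i}\big)$. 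Applying $\pi_{n+1\ast}$ and the projection formula, fact (iii) annihilates the first term, while fact (iv) rewrites each surviving summand as $\psi_j^{k_j-1}\prod_{i\neq j}\psi_i^{k_i}$, giving exactly the claimed formula.

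The hard part will be securing the geometric inputs (i) and (iv) cleanly rather than the final bookkeeping: once the vanishing $\psi_i\cdot D_i = 0$ and the normalization $\pi_{n+1\ast}(D_i)=[\overline{M}_{0,n}]$ are in hand, the collapse of the expansion via (ii) and the degree count in (iii) are purely formal. I would therefore concentrate the care on the restriction-to-the-boundary computation in (i) and on identifying $D_i$ with the image of the stabilizing section in (iv), treating the remaining manipulation with the projection formula as routine.
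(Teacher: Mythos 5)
Your argument is correct: the reduction of the String Equation to Lemma \ref{comparison} via the vanishings $\psi_i\cdot D(\{i,n+1\})=0$ and $D(\{i,n+1\})\cdot D(\{j,n+1\})=0$, followed by the projection formula and the identification of each $D(\{i,n+1\})$ with a section of $\pi_{n+1}$, is exactly the standard derivation. The paper does not prove this lemma itself but defers to \cite{kock}, and your proof matches the argument given there, so there is nothing to flag.
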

\begin{lemma}
Let $\sum k_i = n-3$. Then
\begin{equation}\label{stringEq}
\int_{\overline{M}_{0,n}} \prod_{i=1}^{n} \psi_i^{k_i} = {{n-3}\choose{k_1, \ldots, k_{n}}}
\end{equation}
where the integral sign denotes push-forward to the class of a point.
\end{lemma}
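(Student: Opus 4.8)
The plan is to prove the formula by induction on $n$, using the String Equation (Lemma~\ref{string}) to drop a marked point at each step. The base case is $n=3$: here $\overline{M}_{0,3}$ is a single point, the constraint $\sum k_i = 0$ forces $k_1=k_2=k_3=0$, and both sides equal $1$ since $\int_{\overline{M}_{0,3}} 1 = 1 = \binom{0}{0,0,0}$.

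For the inductive step, I would assume the formula holds on $\overline{M}_{0,n}$ and fix exponents $(k_1,\dots,k_{n+1})$ with $\sum_{i=1}^{n+1} k_i = (n+1)-3 = n-2$. The first observation is a counting one: since there are $n+1$ indices but the exponents sum to only $n-2$, at least three of the $k_i$ must vanish. After relabelling I may assume $k_{n+1}=0$, so that $\prod_{i=1}^{n+1}\psi_i^{k_i} = \prod_{i=1}^{n}\psi_i^{k_i}$ with the last mark carrying exponent zero --- exactly the shape required to apply the String Equation.

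Next I would pass from $\overline{M}_{0,n+1}$ to $\overline{M}_{0,n}$ via the forgetful map $\pi_{n+1}$. Since integration is push-forward to a point and push-forward is functorial along $\overline{M}_{0,n+1}\xrightarrow{\pi_{n+1}}\overline{M}_{0,n}\to \mathrm{pt}$, one has $\int_{\overline{M}_{0,n+1}}\prod_{i=1}^n\psi_i^{k_i} = \int_{\overline{M}_{0,n}}\pi_{n+1\ast}\!\big(\prod_{i=1}^n\psi_i^{k_i}\big)$. Applying Lemma~\ref{string} rewrites the right-hand side as $\sum_{j\,:\,k_j\neq 0}\int_{\overline{M}_{0,n}}\psi_j^{k_j-1}\prod_{i\neq j}\psi_i^{k_i}$. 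Each of these integrals now has exponents summing to $(n-2)-1 = n-3 = \dim\overline{M}_{0,n}$, so the inductive hypothesis applies and evaluates the $j$-th term to $\binom{n-3}{k_1,\dots,k_j-1,\dots,k_n}$.

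It then remains to verify the purely combinatorial identity
\[
\sum_{j\,:\,k_j\neq 0}\binom{n-3}{k_1,\dots,k_j-1,\dots,k_n} = \binom{n-2}{k_1,\dots,k_n},
\]
which is the multinomial analogue of Pascal's rule: writing each multinomial coefficient as a quotient of factorials, the left side equals $\frac{(n-3)!}{\prod_i k_i!}\sum_j k_j$, and since $\sum_j k_j = n-2$ this collapses to $\frac{(n-2)!}{\prod_i k_i!} = \binom{n-2}{k_1,\dots,k_n}$, the claimed coefficient on $\overline{M}_{0,n+1}$. I expect the main subtlety to lie not in any single deep step but in the bookkeeping: keeping the dimension constraint $\sum k_i = \dim$ consistent at each level so that every integral that appears is genuinely a top intersection, and justifying via the counting argument the reduction to an index with vanishing exponent. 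The geometric content --- that $\overline{M}_{0,3}$ is a point and that the String Equation holds --- does the real work, and what is left is the elementary multinomial identity.
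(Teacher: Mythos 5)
Your proof is correct, and it is the standard argument: the paper itself does not prove this lemma but defers it to \cite{kock}, where exactly this induction via the String Equation (base case $\overline{M}_{0,3}=\mathrm{pt}$, reduction to an index with vanishing exponent, and the multinomial Pascal identity) is carried out. Nothing further is needed.
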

\subsection{Hassett spaces}
In \cite{HASSETT2003316}, Hassett constructed a new class of modular compactifications $ \overline{M}_{g, \mathcal{A}} $ of the moduli space ${M}_{g, n} $ of smooth curves with n marked points parameterized by an input datum $(g, \mathcal{A})$. Here $g$ is the genus of the curves and $\mathcal{A} = (a_1 , \ldots , a_n )$ is the {\bf weight data}  of weights $a_i \in \Bbb Q \cap (0, 1]$ satisfying the inequality $2g - 2 + a_1 + \ldots + a_n > 0$.

$ \overline{M}_{g, \mathcal{A}} $ that we call Hassett space parameterizes curves $(C, p_1 ,\ldots , p_n )$ with n marked non-singular points on C that are $\mathcal{A}${\bf-stable} if the the following two conditions are fulfilled.
\begin{enumerate}
\item The twisted canonical divisor $K_C + a_1 p_1 + \ldots + a_n p_n$ is ample.
\item A subset $p_{i_1} , \ldots, p_{i_k}$ of the marked points is allowed to coincide only if the inequality $a_{i_1} +\ldots+ a_{i_k} \leq 1$ holds.
\end{enumerate}
For $g=0$, the stability condition means that a rational $n$-pointed  curve $(C, p_1, \ldots, p_n)$ is $\mathcal{A}$-stable if on every irreducible component of $C$ the number of nodes plus the sum of the weights of the marks lying on that component is strictly greater than $2$; and on $C$, $a_1 + \ldots + a_n > 2$. 
In the case $(a_1 ,\ldots , a_n ) = (1, \ldots , 1)$, this condition is nothing but the traditional notion of an n-marked stable curve, and so the compactification $ \overline{M}_{g, \mathcal{A}} $ is exactly the well-known
Deligne-Mumford compactification $ \overline{M}_{g, n} $ of $ {M}_{g, n} $ in this case.
\begin{defi}
Given two weight data $\mathcal{A}, \mathcal{B}$, we say that $\mathcal{B} \leq \mathcal{A}$ if for every $i$, $b_i\leq a_i$. Then there exists a regular {\bf reduction} morphism:
\begin{equation}
c_{\mathcal{B},\mathcal{A}}:\overline{M}_{0, \mathcal{A}} \to \overline{M}_{0, \mathcal{B}}
\end{equation}
such that $c_{\mathcal{B},\mathcal{A}}(C,p_1, \ldots, p_n)$ is obtained by contracting twigs that become unstable when the weights of the points are ``lowered'' from $a_i$ to $b_i$.
\end{defi}
Moduli spaces of weighted stable rational curves also have $\psi$ classes, which are defined in the same way as for $\overline{M}_{0,n}$. A $\psi$ class on $ \overline{M}_{0, \mathcal{A}} $ will be denoted as $\bar \psi_i$.  
\begin{lemma}\label{redpsi}
Consider the reduction morphism
$c:  \overline{M}_{0,n} \to  \overline{M}_{0,\mathcal{A}}$. For $i = 1, \ldots, n$, we have:
\begin{equation}\label{redpsiEq}
\psi_i = c^\ast (\bar \psi_i) + \sum_{{i \in I,   a_{I} \leq 1}} D(I),
\end{equation}

where 
$
a_{I} = \sum_{j\in I}a_j.
$
\end{lemma}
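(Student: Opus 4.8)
The plan is to compare the two cotangent line bundles directly, in the same spirit as the proof of Lemma \ref{comparison}. Write $\mathbf{L}_i$ for the $i$-th cotangent line bundle on $\overline{M}_{0,n}$, so that $\psi_i = c_1(\mathbf{L}_i)$, and let $\bar{\mathbf{L}}_i$ be its counterpart on $\overline{M}_{0,\mathcal{A}}$, so that $\bar\psi_i = c_1(\bar{\mathbf{L}}_i)$. It suffices to produce an isomorphism of line bundles
\begin{equation*}
\mathbf{L}_i \;\cong\; c^\ast(\bar{\mathbf{L}}_i) \otimes \mathcal{O}\Big(\sum_{i\in I,\ a_I\le 1} D(I)\Big)
\end{equation*}
and then take first Chern classes. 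First I would set up the contraction on the level of universal families. Let $\pi:\mathcal{C}\to\overline{M}_{0,n}$ and $\bar\pi:\bar{\mathcal{C}}\to\overline{M}_{0,\mathcal{A}}$ be the universal curves, with $i$-th sections $\sigma_i,\bar\sigma_i$, so that $\mathbf{L}_i=\sigma_i^\ast\omega_\pi$ and $\bar{\mathbf{L}}_i=\bar\sigma_i^\ast\omega_{\bar\pi}$, where $\omega$ denotes the relative dualizing sheaf. Pulling the second family back along $c$ and using the modular description of the reduction morphism, there is a contraction $b:\mathcal{C}\to c^\ast\bar{\mathcal{C}}$ over $\overline{M}_{0,n}$ which, fibrewise, collapses exactly the twigs that become unstable after lowering the weights to $\mathcal{A}$, and which is compatible with the sections, i.e.\ $b\circ\sigma_i$ is the section of $c^\ast\bar{\mathcal{C}}$ induced by $\bar\sigma_i$. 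Away from these twigs $b$ is an isomorphism.

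Next I would compare relative dualizing sheaves under $b$. Since $b$ contracts trees of rational curves fibrewise, the standard comparison reads $\omega_\pi \cong b^\ast\omega_{c^\ast\bar\pi}\otimes\mathcal{O}(E)$ with $E$ an effective divisor supported on the contracted locus of $\mathcal{C}$. Restricting along $\sigma_i$ gives $\mathbf{L}_i\cong c^\ast\bar{\mathbf{L}}_i\otimes\sigma_i^\ast\mathcal{O}(E)$, so everything comes down to identifying the divisor class $\sigma_i^\ast E$ on $\overline{M}_{0,n}$. The section $\sigma_i$ meets a fibrewise-collapsed twig precisely over those boundary divisors $D(I)$ whose generic curve has the mark $p_i$ lying on a twig carrying the mark-set $I$ that is collapsed; by the genus-$0$ $\mathcal{A}$-stability criterion recalled above, a twig attached at a single node and carrying the marks $I$ is collapsed exactly when its stability number $1+a_I$ fails to exceed $2$, i.e.\ when $a_I\le 1$. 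Note that over $D(I)$ at most one side can be collapsed, since $a_I\le 1$ forces $a_{I^c}>1$, and that a collapsed twig contributes to $\sigma_i^\ast E$ only when it is the side containing the mark $i$; hence $\sigma_i^\ast E$ is supported on $\bigcup_{i\in I,\ a_I\le 1}D(I)$.

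The main obstacle, and the step I would spend the most care on, is the multiplicity: showing that $\sigma_i^\ast\mathcal{O}(E)=\sum_{i\in I,\ a_I\le 1}D(I)$ with every coefficient equal to $1$. I would establish this by a local analysis at a generic point of a fixed $D(I)$ with $i\in I$ and $a_I\le 1$, where the picture is a one-parameter smoothing of the single node separating the collapsing $I$-twig from the rest of the curve; there the contraction $b$ is exactly the stabilization that forgets the unstable twig, so the comparison is governed by the same local model as Lemma \ref{comparison} and yields vanishing order one along $D(I)$. As a consistency check, and as an alternative route to the coefficients, I would use that in genus $0$ the Picard group of $\overline{M}_{0,n}$ is spanned by the boundary classes: write $\psi_i-c^\ast\bar\psi_i=\sum_J m_J\,D(J)$, and pin down each $m_J$ by restricting to a transverse one-parameter family through a generic point of $D(J)$ and comparing the degrees of $\psi_i$ and $c^\ast\bar\psi_i$. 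The degree jumps by exactly $1$ precisely on those families whose special twig through $p_i$ is contracted, recovering $m_J\in\{0,1\}$ with $m_J=1$ if and only if $i\in J$ and $a_J\le 1$, as claimed.
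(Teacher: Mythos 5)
Your proposal is correct and follows essentially the same route as the paper: both compare $\mathbf{L}_i$ with $c^\ast\bar{\mathbf{L}}_i$ via the induced map on (co)tangent/dualizing sheaves of the universal families, observe it is an isomorphism away from the boundary divisors $D(I)$ with $i\in I$ and $a_I\le 1$, and reduce to a local computation showing the vanishing order along each such divisor is exactly $1$. Your extra care with the multiplicity and the Picard-group consistency check merely fills in what the paper dismisses as ``an easy local calculation.''
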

\begin{proof}
Consider the following commutative diagram:
$$
\xymatrixrowsep{0.6in}
\xymatrixcolsep{1in}
\xymatrix{
        \overline{\sU}_{0,n} \ar[r]^C \ar[d]_\pi & \overline{\sU}_{0,\mathcal{A}}  \ar[d]_{\bar{\pi}}\\
         \overline{M}_{0,n} \ar[r]^c \ar@/_/[u]_{\sigma_i}  & \overline{M}_{0,{\mathcal{A}}} \ar@/_/[u]_{\bar{\sigma_i}}        }
$$
Here $ \overline{\sU}_{0,n} $ and $ \overline{\sU}_{0,\mathcal{A}} $ are the universal families over $\overline{M}_{0,n}$ and $\overline{M}_{0,\mathcal{A}}$ respectively; $ \sigma_i $ and $\bar \sigma_i $ are the $i$-th tautological sections of the corresponding universal families; and $c$ and $C$ are the reduction morphisms. 

Now, pulling back relative differentials, $\Omega_{{\pi}}$ and $\Omega_{\bar{\pi}} $,  yields a natural map
$$ C^{\ast} \Omega_{\bar{\pi}} \to \Omega_{{\pi}} . $$
So we get an induced map
$$ \sigma_i^{\ast} C^{\ast} \Omega_{\bar{\pi}} \to \sigma_i^{\ast} \Omega_{{\pi}}.$$
Since $C \circ \sigma_i = {\bar{\sigma}}_i \circ c$, we may re-write the map above as
$$ p : c^{\ast} {\bar{\sigma}}_i^{\ast} \Omega_{\bar{\pi}} \to \sigma_i^{\ast} \Omega_{{\pi}}.$$
Note that both sides are line bundles on $\overline{M}_{0,n}$, and their first Chern classes
are $c^{\ast} \bar{\psi_i}$ and $\psi_i$, respectively. An easy local calculation shows that the
map $p$ is an isomorphism on the interior and has a simple zero along the boundary divisor $D(I)$ if $i \in I$ and a $a_I \leq 1$. By comparing the first Chern
classes, we deduce
$$
\psi_i = c^\ast (\bar \psi_i) + \sum_{{i \in I,   a_{I} \leq 1}} D(I)
$$
as asserted.
\end{proof} 
Informally, for the pullback of a $\bar{\psi_i}$, a $\psi_i$ is corrected by all boundary divisors where the $i$-th mark is on a twig that gets contracted by $c$. 
\begin{defi}\label{psihatDefred}
Define the $ \hat{\psi_i}  $ class as the pullback of a $\bar\psi$ class under the reduction morphism $c:  \overline{M}_{0,n} \to  \overline{M}_{0,\mathcal{A}}$, 
$$
\hat{\psi_i} := c^{\ast} \bar{\psi_i}.
$$ 
\end{defi}
\begin{cor}\label{psihatpsiCor}
For the reduction morphism
$c:  \overline{M}_{0,n} \to  \overline{M}_{0,\mathcal{A}}$, where $ \mathcal{A} = \left\lbrace \frac{1}{2}, \frac{1}{2}, \ldots ,\frac{1}{2} \right\rbrace  $, for $i = 1, \ldots, n$, we have:
\begin{equation}\label{psihatpsiCorEq}
\hat \psi_i = \psi_i - \sum_{j, j \neq i} D\left(\left\lbrace  i,j\right\rbrace  \right) .
\end{equation}
\end{cor}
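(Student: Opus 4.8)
The plan is to obtain this as a direct specialization of Lemma \ref{redpsi}. First I would invoke Definition \ref{psihatDefred}, which sets $\hat\psi_i = c^\ast \bar\psi_i$, and substitute it into the conclusion of Lemma \ref{redpsi}. Rearranging the equation \eqref{redpsiEq} to solve for the pullback term gives
\begin{equation}
\hat\psi_i = \psi_i - \sum_{i \in I, \ a_I \leq 1} D(I),
\end{equation}
which holds for an arbitrary weight datum $\mathcal{A}$. All that remains is to identify precisely which index sets $I$ survive once we impose $\mathcal{A} = \left(\frac{1}{2}\right)^n$.

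Next I would analyze the summation range under this specialization. Since every weight equals $\frac{1}{2}$, the weight of a subset is $a_I = \frac{1}{2}|I|$, so the constraint $a_I \leq 1$ is equivalent to $|I| \leq 2$. Together with the requirement $i \in I$, the only candidate sets are the singleton $\{i\}$ and the two-element sets $\{i,j\}$ with $j \neq i$. The key observation to record is that the symbol $D(I)$ denotes an \emph{irreducible boundary divisor}, which by the convention recalled earlier exists only when both $|I| > 1$ and $|I^c| > 1$. Thus the singleton $I = \{i\}$ does not index any boundary divisor and contributes nothing, leaving only the sets of cardinality exactly $2$.

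Finally I would check that each remaining $D(\{i,j\})$ is genuinely a boundary divisor. For $|I| = 2$ we have $|I^c| = n-2$, and since the Hassett space $\overline{M}_{0,(\frac{1}{2})^n}$ is defined only when $\frac{n}{2} > 2$, i.e.\ $n \geq 5$, the complement satisfies $|I^c| = n-2 \geq 3 > 1$. Hence every $D(\{i,j\})$ with $j \neq i$ is a valid irreducible boundary divisor and appears in the sum, yielding
\begin{equation}
\hat\psi_i = \psi_i - \sum_{j \neq i} D(\{i,j\}),
\end{equation}
as asserted.

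I would characterize this argument as a bookkeeping specialization of Lemma \ref{redpsi} rather than a substantial new computation; the only point demanding care, and the one I would flag explicitly, is the boundary-divisor convention excluding singletons, since a naive reading of the condition $a_I \leq 1$ would erroneously include the term $D(\{i\})$.
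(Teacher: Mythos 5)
Your proposal is correct and follows essentially the same route as the paper: the paper's own proof is the one-line observation that specializing Lemma \ref{redpsi} to all weights $\frac{1}{2}$ forces $a_I \leq 1$ to mean $|I| \leq 2$, so only the divisors $D(\{i,j\})$ survive. Your additional bookkeeping (excluding the singleton via the boundary-divisor convention and checking $|I^c| > 1$) is sound and simply makes explicit what the paper leaves implicit.
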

\begin{proof} 
This follows from (\ref{redpsiEq}) by observing that $ a_i + a_j = 1 $ for all $i,j$ when $ a_i = \frac{1}{2}$ $\forall$ $ i$. 
\end{proof} 
For the work that follows, $\mathcal{A}=(\frac{1}{2},\frac{1}{2}, \ldots, \frac{1}{2})$ for $\overline{M}_{0,\mathcal{A}}$, which we denote by $\overline{M}_{0,\left( \frac{1}{2} \right)^n  }$.
\section{Closed Formula for intersections of $\psi$-classes on $\overline{M}_{0,\left( \frac{1}{2} \right)^n}$}\label{secClosed}
In this section, we develop the closed formula for integrals of $\hat \psi$ monomials corresponding to the $\bar \psi$ monomials on $\overline{M}_{0,\left( \frac{1}{2} \right)^n  }$.
 
We denote by $\sP = \{P_1,P_2,..,P_j,S_1,S_2,...,S_i   \}$ an unordered partition of $[n]$, such that cardinality of each subset in the partition is either $2$ or $1$; if the cardinality of such a subset is $2$, we denote it with $P_j$ and if the cardinality of such subset is $1$, we denote it with $S_j$ . The elements of $P_j$ are denoted by $P_{j1}$ and $P_{j2}$. Further, we denote by $F$ the set of all $P_j$'s, and by $S$ the set of all $S_j$'s. The set of all such partitions is denoted by $\mathfrak{P}$.  
\begin{defi}
Given a $\sP \in \mathfrak{P}$, we define the graph $\Gamma_{\sP}$ as follows:
\begin{enumerate}
\item $\Gamma_{\sP}$ has a `central' node, with $|F|$ number of edges with nodes on ends
\item Attach to each non-`central' node two half edges forming a `fork'; to the `central' node, attach $|S|$ number of half-edges
\item Label the half-edges on a fork as $P_{j1}$ and $P_{j2}$, and half-edges on central node as $S_j$'s.
\end{enumerate}
So, each $P_j$ corresponds to a fork and $S_j$'s correspond to half-edges on the central node. We call this a \textit{$\sP$-graph}. $|F|$ gives the number of forks on the graph. Each such graph is a dual graph of a stratum $\Delta_\sP$ in $\overline{M}_{0,n}$. 
\end{defi}
Clearly, the set of all \textit{$\sP$-graphs} as defined above are in bijection with the set of all partitions $\sP \in \mathfrak{P}$.
\begin{defi}\label{coloringDef} 
Given a $\hat\psi$-monomial $m = \hat\psi_1^{k_1}\hat \psi_2^{k_2}\hat \psi_3^{k_3}...\hat \psi_r^{k_r}$, a \textit{decorated $\sP$-graph} $\Gamma_{\sP}^d$is obtained by coloring a half-edge corresponding to point $t \in P_j$ or $t \in S_j$ if $k_t \neq 0$. 
\end{defi}
\begin{eg}
Given $m = \hat \psi_1^{2}\hat \psi_2$ on $ \overline{M}_{0,6 } $, \\
for $\sP = \{1,2\},\{3\},\{4\},\{5\},\{6\}$, we get the decorated graph $\Gamma_{\sP}^d$ as in figure \ref{fig:image1};\\ for $\sP = \{1,3\},\{2,4\},\{5\},\{6\}$, we get the decorated graph $\Gamma_{\sP}^d$ as in figure \ref{fig:image2},\\ and for $\sP = \{1,3\},\{5,6\},\{2\},\{4\}$, we get the decorated graph $\Gamma_{\sP}^d$ as in figure \ref{fig:image3}.
\begin{figure}[!hbt]
\centering
\minipage{0.32\textwidth}
\centering
  \includegraphics[width=0.6in]{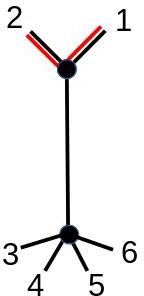}
        \caption{$\Gamma_{\sP}^d$}
  \label{fig:image1}
\endminipage\hfill
\centering
\minipage{0.32\textwidth}
  \includegraphics[width= 1.4in]{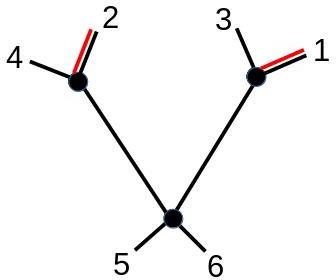}
        \caption{$\Gamma_{\sP}^d$}
  \label{fig:image2}
\endminipage\hfill
\minipage{0.32\textwidth}
\centering
  \includegraphics[width=1.5in]{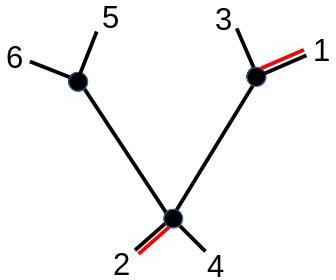}
        \caption{$\Gamma_{\sP}^d$}
  \label{fig:image3}
\endminipage\hfill
\end{figure}
\end{eg}
\begin{defi}\label{defpsibullet} 
Consider a stratum $\Gamma_\sP$, corresponding to the image of the following gluing morphism:
$$
gl: \overline{M}_{0,{P_1} \cup \star P_1 } \times  \ldots \times \overline{M}_{0,{P_s} \cup \star P_s } \times \overline{M}_{0,{S}\cup \bullet P_1 \cup \ldots  \cup \bullet P_s } \to  \overline{M}_{0, n} ,
$$
where for each $P_j$, the half-edge $\bullet P_j$ is the mark on the node pulled back from the factor corresponding to the central node, and $\star P_j$ is the mark on the node pulled back from the factor corresponding to the respective fork corresponding to a $P_j$. 

Consider the projection $f$ to the last factor in the product  
$$
f: \overline{M}_{0,{P_1} \cup \star P_1 } \times  \ldots \times  \overline{M}_{0,{P_s} \cup \star P_s } \times \overline{M}_{0,{S}\cup \bullet P_1\cup  \ldots  \cup \bullet P_s } \to  \overline{M}_{0,{S}\cup \bullet P_1\cup  \ldots  \cup \bullet P_s }. 
$$
We define:
\begin{equation}\label{psibu}
\prod_{j=1}^s\psi_{\bullet P_j}^{k_j}[\Gamma_\sP] :=gl_\ast f^\ast \left( \prod_{j=1}^s\psi_{\bullet P_j}^{k_j} \right)[\Gamma_\sP].
\end{equation}
\end{defi}

\begin{remark}\label{remcal}
The notation introduced  in  Definition \ref{defpsibullet} is an abuse which is convenient enough to be commonly accepted in the field. The class defined in \eqref{psibu} is not a product of the class of the stratum $[\Gamma_\sP]$ with another class in $\overline{M}_{0,n}$; it is a class supported on the cycle $\Gamma_\sP$, and at a basic level one may think of  the symbol $\prod_{j=1}^s\psi_{\bullet P_j}^{k_j}[\Gamma_\sP] $ just as a reminder of this fact. However, the notation becomes useful when performing intersection theoretic computations on such classes. Consider two such classes  $\prod_{j=1}^s\psi_{\bullet P_j}^{k_j}[\Gamma_\sP] $ and $\prod_{j=1}^{\tilde{s}}\psi_{\bullet \tilde{P}_j}^{\tilde{k}_j}[\Gamma_{\tilde{\sP}}]$. One may show that their product is  $\prod_{j=1}^s\psi_{\bullet P_j}^{k_j}\prod_{j=1}^{\tilde{s}}\psi_{\bullet \tilde{P}_j}^{\tilde{k}_j}[\Gamma_\sP\cap \Gamma_{\tilde{\sP}}]$ if the two strata intersect transversely, and it is otherwise corrected by the euler class of the obstruction bundle that computes the product $[\Gamma_\sP][\Gamma_{\tilde{\sP}}]$ as a cycle pushed forward from the inclusion of $\Gamma_\sP\cap \Gamma_{\tilde{\sP}}$ into the ambient space. For this reason the expression $\left(\prod_{j=1}^s\psi_{\bullet P_j}^{k_j}[\Gamma_\sP]\right) \left(\prod_{j=1}^{\tilde{s}}\psi_{\bullet \tilde{P}_j}^{\tilde{k}_j}[\Gamma_{\tilde{\sP}}]\right)$
is often formally manipulated as a product, and rewritten as\\ $\prod_{j=1}^s\psi_{\bullet P_j}^{k_j}\prod_{j=1}^{\tilde{s}}\psi_{\bullet \tilde{P}_j}^{\tilde{k}_j}[\Gamma_\sP][\Gamma_{\tilde{\sP}}]$.
\end{remark}

\begin{defi}\label{defdeco} 
Consider a  $\sP$-graph as illustrated in Figure (\ref{fig:image13a}). For this $\sP$-graph, define the following $ \psi $-function :
\begin{equation}\label{psiFn1stThm}
\phi_{\sP}(\psi)=   \psi_{S_{1}}^{k_{S_{1}}}\ldots ... \psi_{S_{q}}^{k_{S_{q}}} \left(\psi_{\bullet P_1}^{k_{P_{11}}+k_{P_{12}}-1}\ldots  \psi_{\bullet{P_l}}^{k_{P_{l1}}+k_{P_{l2}}-1}\right)  .
\end{equation}
\end{defi}

\begin{figure}[htb]
  \includegraphics[width=3in]{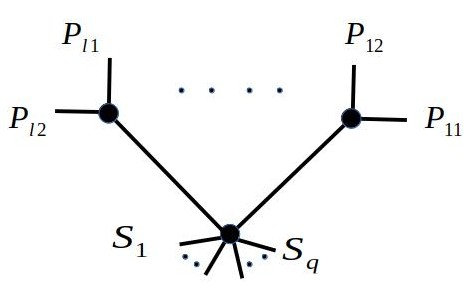}
  \caption{A $\sP$-graph with $l$ forks.}
  \label{fig:image13a}
\end{figure}
\begin{lemma}
On $\overline{M}_{0,n}$, let $D_i$ be a divisor $D(\{i,j\})$ for some $j \neq i$. Then any non-trivial $\prod\limits_{i=1}^{s} D_{i}$, with $s \leq n-3$, is supported on a $\sP$-graph $\Gamma_{\sP}$. 
\label{Dlemma}
\end{lemma}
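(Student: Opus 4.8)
The plan is to reduce the statement to a purely combinatorial fact about which intersections of boundary divisors are non-empty, and then to identify the resulting locus with a forked stratum. First I would recall the standard \emph{compatibility} (laminarity) criterion for the boundary of $\overline{M}_{0,n}$: for two boundary divisors $D(A)$ and $D(B)$ one has $D(A)\cap D(B)=\emptyset$, and hence $D(A)\cdot D(B)=0$, unless the partitions $\{A,A^c\}$ and $\{B,B^c\}$ are compatible, meaning that at least one of the four sets $A\cap B$, $A\cap B^c$, $A^c\cap B$, $A^c\cap B^c$ is empty. Geometrically this holds because a marked point and its separating node sit on a single component of a stable rational tree, so two nodes whose associated partitions ``cross'' cannot coexist on one curve. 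Since a non-zero cycle class cannot be supported on the empty set, if $\prod_{i=1}^s D_i$ is non-trivial then all the divisors are pairwise compatible.

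Next I would specialize this criterion to divisors $D_i=D(\{a_i,b_i\})$ with $|\{a_i,b_i\}|=2$. Writing $A=\{a,b\}$ and $B=\{c,d\}$ for two of the associated pairs, I would check the four intersections: $A\cap B^c=\emptyset$ and $A^c\cap B=\emptyset$ each force $A=B$ (two equal $2$-sets), while $A^c\cap B^c=\emptyset$ forces $A\cup B=[n]$, which is impossible once $n\ge 5$ since $|A\cup B|\le 4$, and reduces to the disjoint case when $n=4$. Hence, for distinct divisors, compatibility is equivalent to $\{a_i,b_i\}\cap\{a_{i'},b_{i'}\}=\emptyset$. Consequently the distinct pairs appearing in a non-trivial product are pairwise disjoint and form a partial matching of $[n]$; taking the matched pairs as the $P_j$'s and the remaining $n-2s$ points as the singletons $S_j$'s produces a partition $\sP\in\mathfrak{P}$, with $F$ the set of matched pairs and $S$ the set of unmatched points.

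Finally I would identify the intersection with the stratum attached to this $\sP$. A curve in $\bigcap_i D(\{a_i,b_i\})$ must carry, for each $i$, a node separating $\{a_i,b_i\}$ from the rest; since the pairs are disjoint, after stabilization these nodes cut off $s$ distinct rational tails, each carrying one pair, all meeting a single central component carrying the singletons. The generic such curve therefore has dual graph exactly $\Gamma_\sP$, so $\bigcap_i D_i=\Delta_\sP$ and the product is supported on the $\sP$-graph. The hypothesis $s\le n-3$ enters here to guarantee that the central node, bearing $s$ nodes and $n-2s$ marks and hence $n-s\ge 3$ special points, is stable, so that $\Gamma_\sP$ is a genuine nonempty stratum rather than an over-intersection. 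The main obstacle I anticipate is this last step: carefully arguing that the intersection is irreducible and equals the closed forked stratum $\Delta_\sP$ (rather than merely containing it or splitting into several components), and confirming that a repeated factor $D(A)^2$ contributes only a self-intersection correction supported on the same $\Gamma_\sP$ without enlarging the support.
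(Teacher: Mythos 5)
Your proposal is correct for the lemma as literally stated, but it is organized differently from the paper's argument. The paper proceeds by induction on the number of factors: it starts from a single $D(\{1,i_1\})$, analyzes the product of two such divisors into the trichotomy (partially overlapping pairs give zero; disjoint pairs give a transverse intersection and a new fork; equal pairs give an excess intersection producing a $-\psi_{\bullet}$ decoration on the same graph), and then repeats exactly this trichotomy when multiplying an already-built forked stratum by one more divisor $D(\{s+1,k\})$ according to whether $\{s+1,k\}$ meets the existing forks, lies in the central component, or coincides with an existing fork. Your route instead invokes the general compatibility criterion for boundary divisors once, specializes it to $2$-element sets to conclude that distinct pairs in a non-trivial product must be pairwise disjoint, and then identifies $\bigcap_i D(\{a_i,b_i\})$ globally with the closed forked stratum $\Delta_{\sP}$ via the gluing map. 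Both rest on the same underlying fact about when two pair-divisors meet; yours is more conceptual and shorter for the support statement, while the paper's induction buys something you only gesture at in your final paragraph: it tracks the actual cycle class at each step, producing the explicit signs $(-1)^{|F|}$ and the $\psi_{\bullet P_j}$ decorations from the non-transverse self-intersections. That extra output is what Theorem \ref{arbintThm} actually consumes, so if you adopted your version you would still need to carry out the excess-intersection computation ($D(A)^2=-\psi_{\bullet}\cdot D(A)$ in the sense of Definition \ref{psiOnD}) separately, as the paper does in Lemma \ref{lem1}. Your worries about irreducibility are easily dispatched (the intersection is the image of a product of moduli spaces under gluing, hence irreducible), and your observation that $s\le n-3$ guarantees stability of the central component is a point the paper leaves implicit.
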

\begin{proof} We prove by induction.
$D_1$ = 
$\begin{pmatrix}
\includegraphics[width=0.75in]{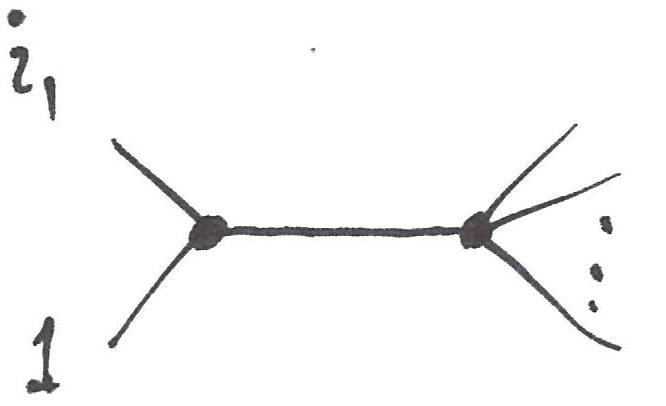}
\end{pmatrix}
$ is clearly a $\sP$-graph, hence trivially supported on a $\sP$-graph. Now,
\begin{align*}
D_1D_2 = \begin{pmatrix}\includegraphics[width=0.75in]{fig5.jpeg}\end{pmatrix}
                \begin{pmatrix}\includegraphics[width=0.75in]{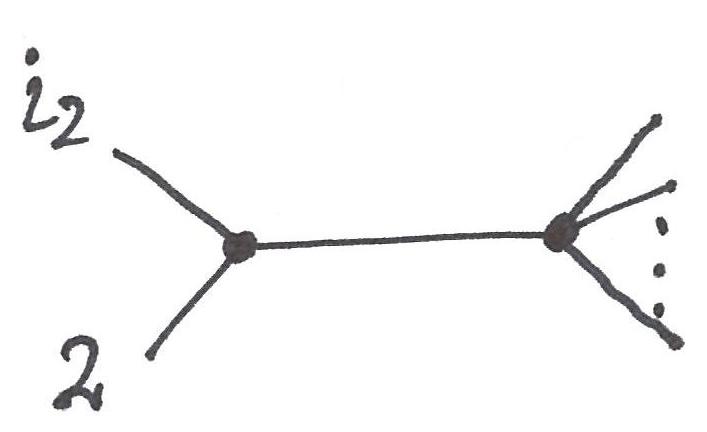}\end{pmatrix}\\
\end{align*}
\begin{align*}
= 
  \begin{cases}
    0, 
                        & \text{if } \vert\lbrace1,i_1\rbrace\cap\lbrace2,i_2\rbrace\vert=1 \\
    \begin{pmatrix}\includegraphics[width=1.3in]{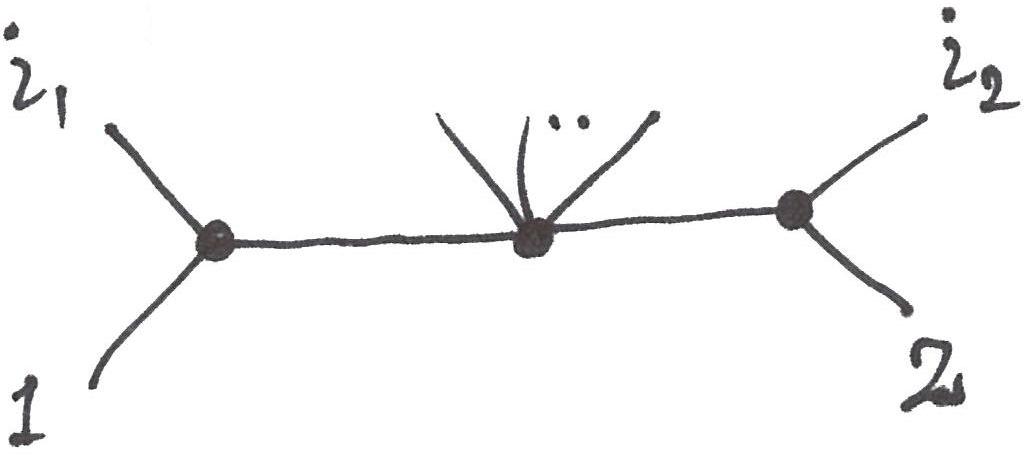}\end{pmatrix},       
                        & \text{if } \vert\{1,i_1\}\cap\{2,i_2\}\vert=\emptyset \\
    - \begin{pmatrix}\includegraphics[width=.85in]{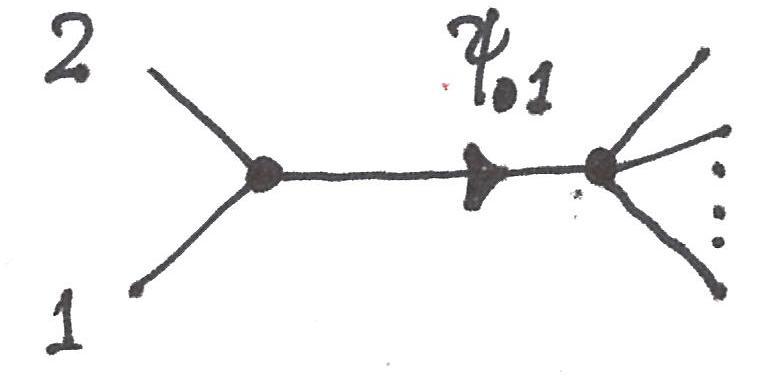}\end{pmatrix}, 
                        & \text{if } \{1,i_1\}=\{2,i_2\}
  \end{cases}
\end{align*}
The first is a trivial intersection, the second is a transverse intersection, and the third is a non-transverse intersection. In the case of non-transverse intersection, we decorate the graph with a $\psi$-class which here we denote by $\psi_{\bullet 1}$, which corresponds to $\psi_{\bullet P_1}$ as in Definition \ref{defpsibullet}. 
So, all the non-trivial intersections $D_1D_2$ are supported on $\sP$-graphs. Now suppose $\prod\limits_{i=1}^{s} D_{i}$ is supported on a $\sP$-graph, and suppose that this $\sP$-graph has $j$ number of forks. Then,  
$$(\prod\limits_{i=1}^{s} D_{i})D_{s+1}$$ is supported on 
$$
 \begin{pmatrix}\includegraphics[width=1.4in]{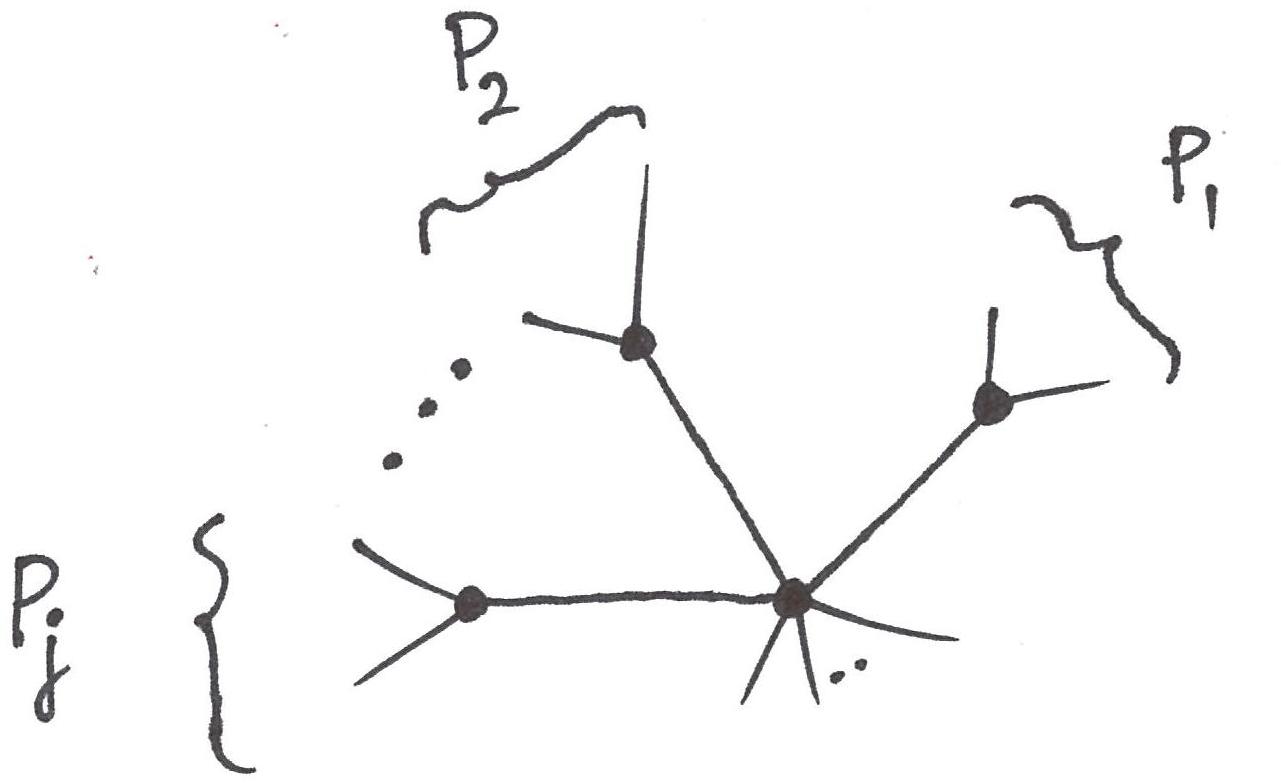}\end{pmatrix} \begin{pmatrix}\includegraphics[width=0.8in]{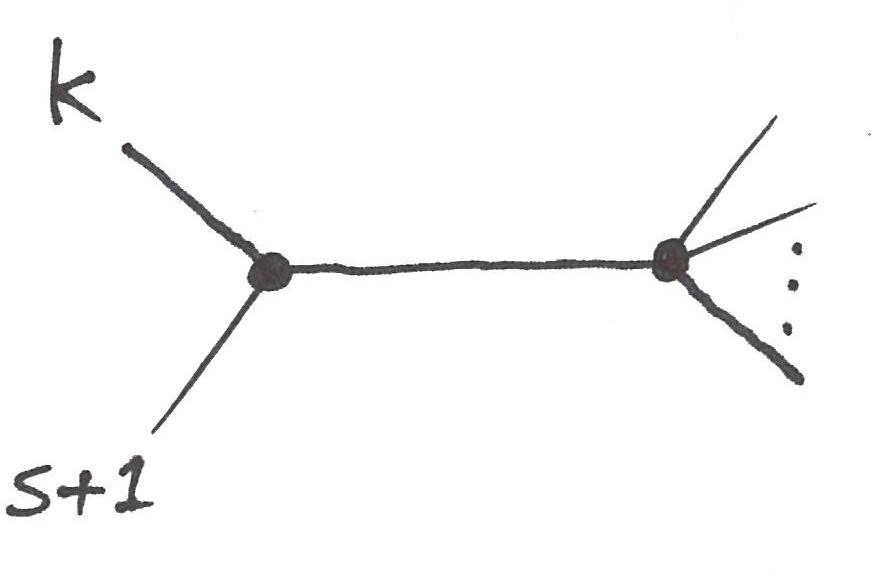}\end{pmatrix}
$$
$$
=
  \begin{cases}
                0,              & \text{if} \{s+1,k\}\nsubseteq P_i, S \hspace{0.1cm}\forall \hspace{0.1cm} i \\ 
                                \begin{pmatrix}\includegraphics[width=1.3in]{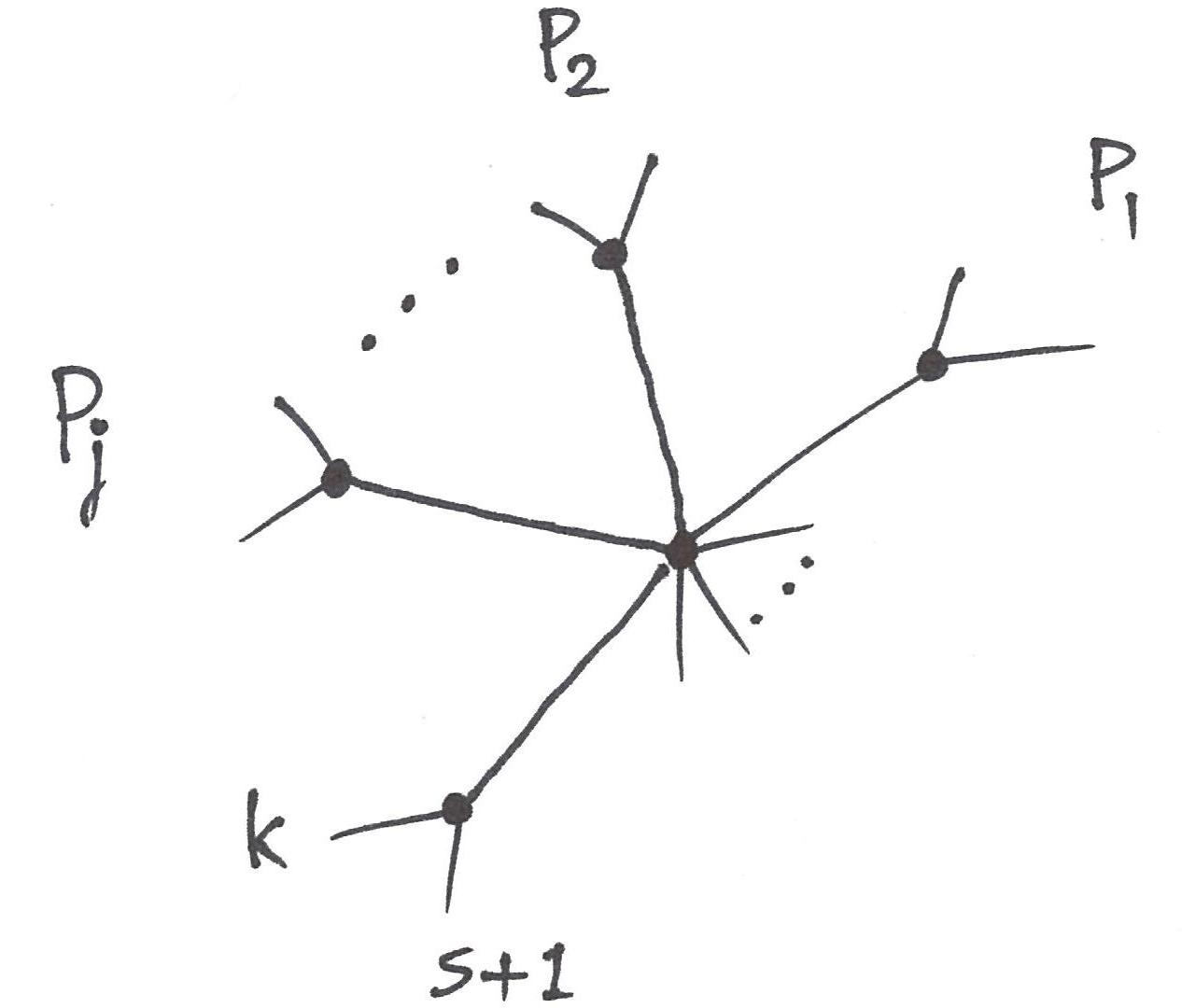}\end{pmatrix}, 
                                                & \text{if} \{s+1,k\}\subseteq\ S \\
                - \begin{pmatrix}\includegraphics[width=1.5in]{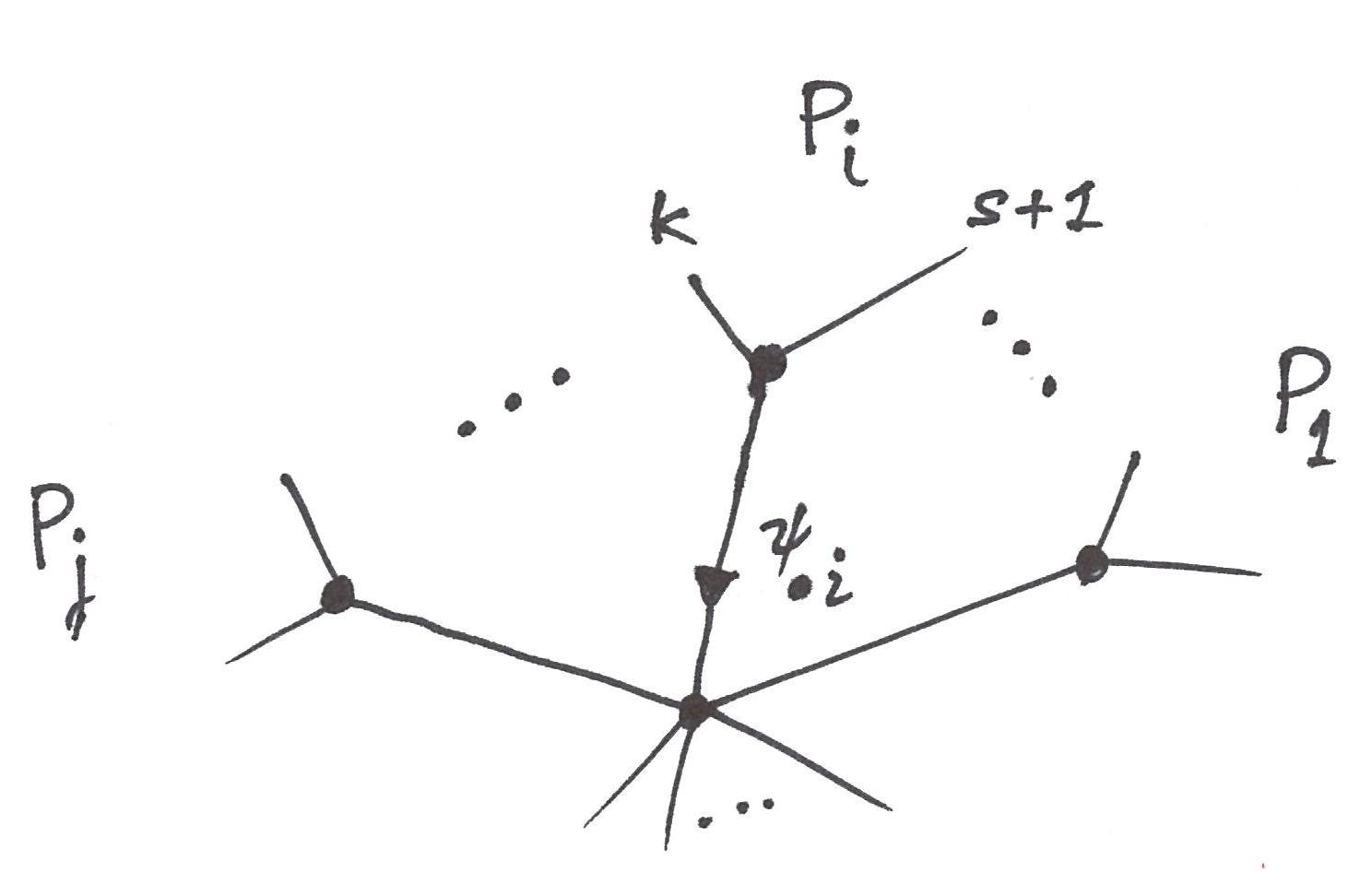}\end{pmatrix},
                                & \text{if}\{s+1,k\}\subseteq\ P_i \hspace{0.1cm} \text{for some} \hspace{0.1cm} i \\
  \end{cases}
$$
Again here, the first is a trivial intersection, the second is a transverse intersection, and the third is a non-transverse intersection; and in the case of non-transverse intersection, we decorate the graph with a $\psi$-class which here we denote by $\psi_{\bullet i}$, corresponding to subset $P_i$ in the partition of $[n]$ which corresponds to $\psi_{\bullet P_i}$ as in Definition \ref{defpsibullet}.
So, all graphs that we get for non-trivial intersections are in fact $\sP$-graphs.

\end{proof}

The following definition is a special case of Definition \ref{defpsibullet}, that will be of particular use in Lemma \ref{lem1}.
\begin{defi}\label{psiOnD}
Given a divisor $D(\{i,j\})$ on $\overline{M}_{0,n} $, consider the following commutative diagram:
$$
\xymatrixrowsep{0.5in}
\xymatrixcolsep{0.6in}
\xymatrix{\overline{M}_{0,n} \\D(\{i,j\}) \ar[u]^{gl} \ar[r]^f & \overline{M}_{0, [n]\setminus \{i,j\}   \bigcup \bullet \{i,j\}}
}
$$
where $gl$ is a gluing morphism, and $f$ is a projection map to $\overline{M}_{0,n-1} $ with marked points $ [n]\setminus \{i,j\}   \bigcup \bullet \{i,j\} $.
Then,
$$
\psi_{\bullet {\left\lbrace i,j\right\rbrace }}^{k} D(\{i,j\}) := gl_{\ast} f^{\ast} \psi_{\bullet {\left\lbrace i,j\right\rbrace }}^{k}.
$$
\end{defi}

\begin{lemma}
\label{lem1}
We use notation as in Definitions \ref{defpsibullet}, \ref{psiOnD} and intersection theoretic calculus as explained in Remark \ref{remcal}; on $ \overline{M}_{0,n} $, we have
\begin{equation}\label{psibulletExpansion}
 \hat \psi_i^{k_i} =  \psi_i^{k_i}- \sum_{j} \psi_{\bullet {\left\lbrace i,j\right\rbrace }}^{k_i-1} D(\{i,j\})
\end{equation}
\end{lemma}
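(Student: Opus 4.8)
The plan is to induct on the exponent $k_i$. The base case $k_i=1$ is precisely Corollary \ref{psihatpsiCor}, which gives $\hat\psi_i = \psi_i - \sum_{j\neq i} D(\{i,j\})$. Writing $D_j := D(\{i,j\})$ for brevity, the whole argument rests on how $\psi_i$ and the divisors $D_j$ multiply against one another, so first I would isolate three structural identities and then feed them into the inductive step.

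The three facts are: (a) $\psi_i\cdot D_j = 0$; (b) $D_j\cdot D_{j'} = 0$ whenever $j\neq j'$; and (c) the self-intersection $D_j^2 = -\,\psi_{\bullet\{i,j\}}\,D_j$ in the calculus of Definition \ref{psiOnD}. For (a), by the projection formula $\psi_i\cdot[D_j] = gl_\ast(gl^\ast\psi_i)$, and the gluing-pullback relation $gl_I^\ast\psi_i = \pi_1^\ast\psi_i$ (applied with $I=\{i,j\}\ni i$) identifies $gl^\ast\psi_i$ with a class pulled back from the twig factor $\overline{M}_{0,\{i,j,\star\}}$; since this factor is $\overline{M}_{0,3}$, a point, that class vanishes. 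For (b), a curve lying in both $D_j$ and $D_{j'}$ would carry the mark $i$ simultaneously on a two-pointed twig $\{i,j\}$ and on a two-pointed twig $\{i,j'\}$, which is impossible, so the two strata are disjoint and their product is zero. For (c), the first Chern class of the normal bundle of a boundary divisor equals $-\psi_\star-\psi_{\bullet\{i,j\}}$, the sum of the cotangent classes at the two branches of the node; the branch class $\psi_\star$ again lives on the point $\overline{M}_{0,\{i,j,\star\}}$ and so vanishes, leaving $D_j^2 = -\psi_{\bullet\{i,j\}}D_j$.

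Granting these, I would run the induction. Assume $\hat\psi_i^{k_i} = \psi_i^{k_i} - \sum_j \psi_{\bullet\{i,j\}}^{k_i-1}D_j$ and multiply by $\hat\psi_i = \psi_i - \sum_j D_j$:
\begin{equation*}
\hat\psi_i^{k_i+1} = \psi_i^{k_i+1} - \psi_i\sum_j \psi_{\bullet\{i,j\}}^{k_i-1}D_j - \psi_i^{k_i}\sum_j D_j + \sum_{j,j'} \psi_{\bullet\{i,j\}}^{k_i-1}D_j D_{j'}.
\end{equation*}
The second and third sums vanish by (a) (every summand carries a factor $\psi_i D_j = 0$, using $k_i\geq 1$), and in the last sum (b) kills the off-diagonal terms while (c) turns each diagonal term into $\psi_{\bullet\{i,j\}}^{k_i-1}D_j^2 = -\psi_{\bullet\{i,j\}}^{k_i}D_j$. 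What remains is exactly $\psi_i^{k_i+1} - \sum_j \psi_{\bullet\{i,j\}}^{k_i}D_j$, completing the step.

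I expect the main obstacle to be the rigorous justification of identity (c), and more broadly the legitimacy of manipulating the strata-supported classes $\psi_{\bullet\{i,j\}}^{a}D_j$ as ordinary products in the Chow ring. This is exactly the subtlety flagged in Remark \ref{remcal}: products of such classes agree with the naive product only up to a correction by the Euler class of an obstruction bundle when the strata fail to meet transversely, and the self-intersection $D_j^2$ is precisely the non-transverse case whose correction is the $-\psi_{\bullet\{i,j\}}$ term. Once (c) is pinned down via the normal-bundle computation, the remaining bookkeeping is routine. Alternatively, one can bypass the induction and expand $(\psi_i-\sum_j D_j)^{k_i}$ directly: facts (a) and (b) force every surviving monomial to be a pure power of a single factor, and the identity $D_j^m = (-1)^{m-1}\psi_{\bullet\{i,j\}}^{m-1}D_j$ (an iterate of (c)) reduces $(-D_j)^{k_i}$ to $-\psi_{\bullet\{i,j\}}^{k_i-1}D_j$, yielding the same formula.
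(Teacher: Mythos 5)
Your proposal is correct and rests on exactly the same three ingredients as the paper's proof, which expands $\bigl(\psi_i-\sum_j D(\{i,j\})\bigr)^{k_i}$ directly, discards the cross terms (your fact (a), which the paper attributes tersely to ``dimension reasons''), keeps only the self-intersections in $\bigl(\sum_j D(\{i,j\})\bigr)^{k_i}$ (your fact (b)), and applies the iterated excess-intersection identity $D(\{i,j\})^{m}=(-1)^{m-1}\psi_{\bullet\{i,j\}}^{m-1}D(\{i,j\})$ (your fact (c)); your closing ``alternative'' paragraph is literally the paper's argument. Your inductive packaging and your explicit justifications of (a)--(c) via the gluing-pullback lemma and the normal-bundle computation are more detailed than what the paper writes, but not a different route.
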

\begin{proof}
Using Corollary \ref{psihatpsiCor},
$$
 \hat \psi_i^{k_i} =  (\psi_i- \sum_{j} D(\{i,j\}))^{k_i}
$$
$$
=   \psi_i^{k_i} + \ldots + (-1)^{k_i} \left( \sum_{j} D(\{i,j\})\right) ^{k_i}
$$
$$
=   \psi_i^{k_i} + (-1)^{k_i} \sum_{j} D(\{i,j\})^{k_i}
$$
$$
=  \psi_i^{k_i} + \ldots + (-1)^{k_i} (-1)^{k_i-1} \psi_{\bullet {\left\lbrace i,j\right\rbrace }}^{k_i-1} \sum_{j} D(\{i,j\})
$$
$$
=  \psi_i^{k_i}- \sum_{j} \psi_{\bullet {\left\lbrace i,j\right\rbrace }}^{k_i-1} D(\{i,j\})
$$
The third equality happens as all the terms in the expansion except the first and the last vanish due to dimension reasons. Furthermore, it's only the self intersections that are non-zero in the last term in the expansion of $  (\psi_i- \sum_{j} D(\{i,j\}))^{k_i} $. The fourth and fifth equalities follow from the $k_i$ self intersections of $ \sum_{j} D(\{i,j\}) $. 
\end{proof}
For the result in the above lemma, we will use the following notation for brevity. 
\begin{equation}\label{psibulletExpansion1}
 \hat \psi_i^{k_i} =  \psi_i^{k_i}- \psi_{\bullet_i}^{k_i-1} D_i
\end{equation}
where 
$$
D_i := \sum_{j}D(\{i,j\})
$$
and 
$$
\psi_{\bullet_i}^{k_i-1} D_i := \sum_{j} \psi_{\bullet {\left\lbrace i,j\right\rbrace }}^{k_i-1} D(\{i,j\})
$$
Also, again for brevity, if the $\sP$-graph in figure (\ref{fig:image13a}) corresponds to partition $\sP = \{P_1,P_2,..,P_s,S_1,S_2,...,S_q   \}$, then we will denote $\psi_{\bullet P_j}$ also as $\psi_{P_j}$ in what follows.
\begin{thm}\label{arbintThm}
With $\sP, F, S, P_j,P_{j1}, P_{j2}, S_i$ as defined above, for $n\geq 5$ we have for $ \overline{M}_{0,n} $:
\begin{equation}
\hat \psi_1^{k_1}\hat \psi_2^{k_2} \ldots \hat \psi_n^{k_n} =
\sum_{\sP \in \mathfrak{P}} (-1)^{|F|} \left[  \Gamma_{{\sP}} \right] \prod_{S_i \in S} \psi_{S_i}^{k_{S_i}} \prod_{P_j \in F} \psi_{P_j}^{k_{P_{j1}} + k_{P_{j2}} - 1} 
\label{arbintThmEq}
\end{equation}
where $\left[  \Gamma_{{\sP}} \right]$ is the class of boundary stratum in $ \overline{M}_{0,n} $ with dual graph $ \Gamma_{{\sP}}$.
\end{thm}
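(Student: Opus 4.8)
The plan is to expand the product $\prod_{i=1}^n \hat\psi_i^{k_i}$ by substituting the single-class expansion of Lemma \ref{lem1} (in the abbreviated form \eqref{psibulletExpansion1}) into each factor, and then to collect terms according to the boundary stratum on which they are supported. Writing $C=\{i : k_i\neq 0\}$ for the set of ``colored'' indices, each factor with $i\in C$ becomes $\psi_i^{k_i}-\psi_{\bullet_i}^{k_i-1}D_i$, while factors with $k_i=0$ are trivial. Multiplying out, I obtain a sum indexed by a choice, for each $i\in C$, of either the $\psi$-term $\psi_i^{k_i}$ or the divisor-term $-\psi_{\bullet_i}^{k_i-1}\sum_{j\neq i}D(\{i,j\})$; expanding the latter amounts to selecting a partner $j$ for each chosen index. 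Every resulting monomial is a product of $\psi$-classes with a product of divisors $D(\{i,j\})$, and by Lemma \ref{Dlemma} each such term is supported on a $\sP$-graph $\Gamma_{\sP}$.

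The next step is to identify which terms survive and to match them with partitions. Two vanishing mechanisms force the surviving configurations to be exactly the singleton/pair partitions $\sP\in\mathfrak{P}$. First, by the trichotomy of Lemma \ref{Dlemma}, any two selected divisors sharing exactly one index multiply to zero, so the selected partner pairs must be pairwise disjoint or equal. Second, on a fork $\{a,b\}$ the component carrying the marks $a,b$ is $\overline{M}_{0,3}$, a point, so $\psi_a$ and $\psi_b$ restrict to zero there; hence a term in which a colored divisor-index is partnered with a colored index that opted for its $\psi$-term must vanish. Together these force each colored divisor-index to pair \emph{mutually} with another colored index (producing the self-intersection $D(\{a,b\})^2$) or with an uncolored index (producing a single divisor $D(\{a,b\})$), while every colored $\psi$-index and every unused uncolored index becomes a singleton on the central node. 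This is precisely a partition of $[n]$ into pairs (the forks $F$) and singletons ($S$), so the surviving terms are in bijection with $\mathfrak{P}$.

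It then remains to compute the coefficient attached to each $\Gamma_{\sP}$. A singleton $S_i$ contributes $\psi_{S_i}^{k_{S_i}}$, with the convention $\psi^0=1$ for uncolored singletons, where $k_{S_i}=0$. For a fork $P_j=\{a,b\}$ with both marks colored, the self-intersection is evaluated using $D(\{a,b\})^2=-\psi_{\bullet\{a,b\}}D(\{a,b\})$ — the same computation already used in the proof of Lemma \ref{lem1} — so the two sign factors and the decoration $\psi_{\bullet_a}^{k_a-1}\psi_{\bullet_b}^{k_b-1}$ combine to $-\psi_{\bullet P_j}^{k_a+k_b-1}$; for a fork with one uncolored mark ($k_b=0$) the single divisor-term directly gives $-\psi_{\bullet P_j}^{k_a-1}=-\psi_{\bullet P_j}^{k_a+k_b-1}$. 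In either case each fork contributes the factor $-\psi_{P_j}^{k_{P_{j1}}+k_{P_{j2}}-1}$, producing the global sign $(-1)^{|F|}$ together with the decoration $\phi_{\sP}(\psi)$ of Definition \ref{defdeco}. Summing over all $\sP$ yields \eqref{arbintThmEq}. Forks with both marks uncolored cannot arise, since neither index contributes a factor producing $D(\{a,b\})$; these partitions carry the empty exponent $\psi^{-1}$ and are understood to vanish.

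I expect the main obstacle to be the bookkeeping in the second paragraph: showing rigorously that the two vanishing mechanisms carve out exactly the partition structure with neither over- nor under-counting, and in particular verifying that a colored divisor-index partnered with a colored $\psi$-index genuinely vanishes, so that mutual pairing is forced, and that each partition $\sP$ is produced by a unique combination of choices in the expansion. The coefficient computation, by contrast, reduces to a direct application of the self-intersection identity already established in Lemma \ref{lem1}.
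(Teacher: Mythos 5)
Your proposal is correct and follows essentially the same route as the paper: expand via Lemma \ref{lem1}, identify the supports of the resulting monomials as $\sP$-graphs via Lemma \ref{Dlemma}, kill the partitions having a fork with both half-edges uncolored through the $\psi^{-1}=0$ convention, and evaluate each fork's self-intersection to produce the sign $(-1)^{|F|}$ together with the decoration $\phi_{\sP}(\psi)$. The one place you go beyond the paper is in making explicit the vanishing of terms where a divisor-partner lands on a colored index that kept its $\psi$-factor (since $\psi_j$ restricts to zero on the $\overline{M}_{0,3}$ fork component); the paper's claimed bijection between decorated $\sP$-graphs and terms of the expansion leaves this step implicit, so your bookkeeping is, if anything, the more careful of the two.
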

\begin{proof}
Let $D_i = \sum_{j}D(\{i,j\})$. Omitting $\hat \psi$'s with $0$-exponents, and assuming, without loss of generality, that the first $r$ number of $\hat \psi$'s remain with nonzero-exponents,  
$ \hat \psi_1^{k_1} \hat \psi_2^{k_2} \ldots \hat \psi_n^{k_n} 
= \hat \psi_1^{k_1}\hat \psi_2^{k_2}\hat \psi_3^{k_3}...\hat \psi_r^{k_r}$
. Then, 
$$\hat \psi_1^{k_1}\hat \psi_2^{k_2}\hat \psi_3^{k_3}...\hat \psi_r^{k_r} =  (\psi_1- D_1)^{k_1}(\psi_1- D_2)^{k_2}\ldots (\psi_r-  D_r)^{k_r} $$ using relation (\ref{psihatpsiCorEq})
$$= (\psi_1^{k_1}- \psi_{\bullet_1}^{k_1-1} D_1)
        (\psi_2^{k_2}-  \psi_{\bullet_2}^{k_2-1} D_2) \ldots
        (\psi_r^{k_r}-  \psi_{\bullet_r}^{k_r-1} D_r)$$
        using relation (\ref{psibulletExpansion1}) 
\begin{equation}\label{arbintThmEq2}
=(-1)^s \sum_{s=0}^{r} \psi_{\bullet_{i_1}}^{k_{i_1}-1} \psi_{\bullet_{i_2}}^{k_{i_2}-1}\ldots\psi_{\bullet_{i_s}}^{k_{i_s}-1} D_{i_1} D_{i_2}\ldots D_{i_s} \psi_{i_{s+1}}^{k_{i_{s+1}}} \ldots \psi_{i_r}^{k_{i_r}}  
\end{equation}
with $1 \leq i_j \leq r $ and $i_1 < i_2 < \ldots < i_s $. Now, each term in the expansion of the expression above is supported on a $\sP$-graph from Lemma \ref{Dlemma}. 
Pick a $\sP$-graph with $s$ number of colored half-edges on forks and form the corresponding $\Gamma_{\sP}^d$. There are two possibilities: 1) at least one fork has both half-edges uncolored, or 2) all forks of $\Gamma_{\sP}^d$ have at least one colored half-edge. Let the second type of graph (figure \ref{fig:image13}) have $t$ number of forks with both half-edges colored as shown below. Here, $\sP =\left\lbrace  P_1, P_2, \ldots, P_{s-\frac{t}{2}}, S_1,\ldots, S_{r-s}\right\rbrace = \left\lbrace  \{i_1, i_2\}, \ldots, \{i_s,j_{s-t}\}, i_{s+1},\ldots, i_{r}\right\rbrace.$ 
\begin{figure}[!htb]
\centering
  \includegraphics[width=4in]{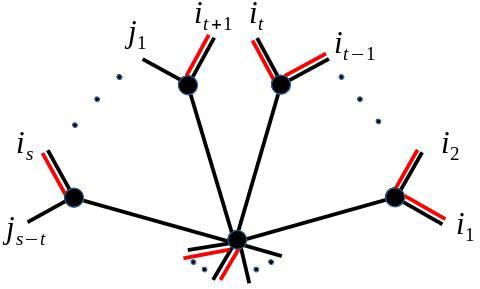}
        \caption{A decorated $\sP$-graph with no uncolored fork}
  \label{fig:image13}
\end{figure}
Intersect both types of $\sP$-graph with $ \phi_{\sP}(\psi) $ as defined in (\ref{psiFn1stThm}), which for this $\sP$-graph is :

\begin{equation}
\phi_{\sP}(\psi)= \left(\psi_{\bullet 1 }^{k_{i_{1}}+k_{i_{2}}-1}\ldots  \psi_{\bullet{{\frac{t}{2}}}}^{k_{i_{t-1}}+k_{i_{t}}-1} \psi_{\bullet_{{\frac{t}{2}+1}}}^{k_{i_{t+1}}-1} \ldots \psi_{\bullet{{s-\frac{t}{2}}}}^{k_{i_{s}}-1}\right) \psi_{i_{s+1}}^{k_{i_{s+1}}}\ldots ... \psi_{i_r}^{k_{i_r}} 
\end{equation}
\textit{Claim} : The intersection of $ \phi_{\sP}(\psi) $ with the first type of graphs gives $0$.\\
\textit{Proof} Without loss of generality, suppose the first type of $\sP$-graph have $ i_1 $ and $ i_2 $ on a fork with $ k_{i_1}=k_{i_2} = 0 $. Then $k_{i_1}+k_{i_2}-1= -1$ and 
$$
\phi_{\sP}(\psi)= \psi_{\bullet_1}^{-1}  \ldots  \psi_{\bullet{{\frac{t}{2}}}}^{k_{i_{t-1}}+k_{i_{t}}-1} \psi_{\bullet_{{\frac{t}{2}+1}}}^{k_{i_{t+1}}-1} \ldots \psi_{\bullet{{s-\frac{t}{2}}}}^{k_{i_{s}}-1} \psi_{i_{s+1}}^{k_{i_{s+1}}}\ldots ... \psi_{i_r}^{k_{i_r}}
$$
is $0$ as negative power of a $\psi$ class by standard convention is $0$. 
Now, consider the second type of  $\sP$-graph (Figure \ref{fig:image13}).\\
\textit{Claim}: $\phi_{\sP} \left( \psi \right) \cdot\Gamma_{{\sP}}^d$, where $\Gamma_{{\sP}}^d$ is the second type of graph, uniquely determines a term in the expansion (\ref{arbintThmEq2}) above.\\ 
\textit{Proof}: 
Define a map from the set of $\sP$-graphs to the terms in the expansion (\ref{arbintThmEq2}) as follows. For each $ i_l $ in $ \Gamma_{{\sP}}^d $ where $ i_l $ is a colored half-edge on a fork, assign a $ D_{i_l} $, to form the product $\prod_{l=1}^{s} D_{i_l}$. Now, for each fork on the graph with half-edges $ i_{l} $ and $ i_{q} $, assign $ \psi_{\bullet_m}^{k_{i_{l}}+k_{i_{q}}-1} $ and form their product; the result is $ \phi_{\sP}(\psi) $. Then $\phi(\psi)\cdot \prod_{l=1}^{s} D_{i_l}$ is precisely the term in the expansion (\ref{arbintThmEq2}) that $\Gamma_{{\sP}}^d$ maps to. Further, reversing the process, we get the preimage of a term in expansion (\ref{arbintThmEq2}) the unique $\sP$-graph (figure \ref{fig:image13}). So, the map is in fact a bijection. Now, \\
$$
\phi_{\sP} \left( \psi \right)\cdot \left[\Gamma_{{\sP}}^d\right] =
$$
\begin{equation}
\\ (-1)^{s+\frac{t}{2}} \left[ \Gamma_{{\sP}} \right] \left(\psi_{\bullet 1 }^{k_{i_{1}}+k_{i_{2}}-1}\ldots  \psi_{\bullet{{\frac{t}{2}}}}^{k_{i_{t-1}}+k_{i_{t}}-1} \psi_{\bullet_{{\frac{t}{2}+1}}}^{k_{i_{t+1}}-1} \ldots \psi_{\bullet{{s-\frac{t}{2}}}}^{k_{i_{s}}-1}\right) \psi_{i_{s+1}}^{k_{i_{s+1}}}\ldots ... \psi_{i_r}^{k_{i_r}}
\end{equation}\label{intEval1}
$$= (-1)^{|F|} \left[  \Gamma_{{\sP}} \right]  \prod_{S_i \in S} \psi_{S_i}^{k_{S_i}} \prod_{P_j \in F} \psi_{P_j}^{k_{P_{j1}} + k_{P_{j2}} - 1} .$$
\end{proof}
\section{Numerical intersections}\label{numer}
In this section, we develop two corollaries of theorem (\ref{arbintThm}) for two versions of a closed formula for top intersections of $\hat \psi$-classes on $\overline{M}_{0,n  } $.Then we encode this formula in a generating function obtained by applying a differential operator to the Witten potential. As pointed out earlier, these corollaries (\ref{mainThm} and \ref{mainThm1}) can also be deduced from theorem $7.9$ in \cite{alexeev_guy_2008}. For our work, we develop specific and explicit closed formulas here and base our combinatorial analysis closely on the structure of dual graphs.   
\begin{cor}\label{mainThm}
With $\sP, F, P_j, P_{j1}, P_{j2}, S_i$ as defined above, for $n\geq 5$ we have:
\begin{equation}
\int_{ \overline{M}_{0,n}} \hat \psi_1^{k_1} \hat \psi_2^{k_2} \ldots \hat \psi_n^{k_n} =
\sum_{\sP \in \mathfrak{P}} (-1)^{|F|}   {{|\sP|-3}\choose{ \langle {k_{P_{i1}}+k_{P_{i2}}-1} \rangle,\langle k_{S}\rangle}}
\label{mainthmEq}
\end{equation}
where $\sum k_i = n-3,$ and 
for a $\sP =\left\lbrace  P_1, P_2, \ldots, P_s, S_1,\ldots, S_q\right\rbrace, \\ \langle {k_{P_{i1}}+k_{P_{i2}}-1}\rangle= k_{P_{11}}+k_{P_{12}}-1, \ldots,k_{P_{s1}}+k_{P_{s2}}-1,$ and $ \langle k_{S}  \rangle = k_{S_1},\ldots,k_{S_q}$.
\end{cor}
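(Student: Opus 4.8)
The plan is to integrate the identity of Theorem \ref{arbintThm} term by term and to evaluate each summand by collapsing it to a top $\psi$-intersection on the moduli space attached to the central node of the corresponding $\sP$-graph. Since $\mathfrak{P}$ is finite, integration commutes with the sum in (\ref{arbintThmEq}), giving
$$
\int_{\overline{M}_{0,n}} \hat\psi_1^{k_1}\cdots\hat\psi_n^{k_n} = \sum_{\sP\in\mathfrak{P}} (-1)^{|F|} \int_{\overline{M}_{0,n}} \left[\Gamma_\sP\right]\, \prod_{S_i\in S}\psi_{S_i}^{k_{S_i}} \prod_{P_j\in F}\psi_{P_j}^{k_{P_{j1}}+k_{P_{j2}}-1},
$$
so it suffices to evaluate a single summand.

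Fixing $\sP$, I would recall from Definition \ref{defpsibullet} that this decorated stratum class is by definition $gl_\ast f^\ast$ of the $\psi$-monomial on the central factor. The gluing morphism $gl$ realizes $\Gamma_\sP$ as a product in which each fork factor $\overline{M}_{0,P_j\cup\star P_j}$ carries exactly three special points and is therefore the single point $\overline{M}_{0,3}$, while the central factor $\overline{M}_{0,S\cup\{\bullet P_1,\dots,\bullet P_s\}}$ is a copy of $\overline{M}_{0,|\sP|}$, since it carries $|S|+|F|=|\sP|$ marked points. Using the projection formula for $gl_\ast$ together with the lemma on the pullback of $\psi$-classes under a gluing morphism (which identifies $gl^\ast\psi_{S_i}$ with the $\psi$ class at the surviving mark $S_i$ on the central factor), and the intersection-theoretic calculus of Remark \ref{remcal}, the integral of the summand collapses to
$$
\int_{\overline{M}_{0,|\sP|}} \prod_{S_i\in S}\psi_{S_i}^{k_{S_i}} \prod_{P_j\in F}\psi_{\bullet P_j}^{k_{P_{j1}}+k_{P_{j2}}-1},
$$
the fork factors contributing trivially because they are points.

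Next I would verify that this is a genuine top intersection on $\overline{M}_{0,|\sP|}$, so that the multinomial evaluation applies. Summing the exponents gives $\sum_i k_i - |F| = (n-3)-|F|$, whereas $\dim\overline{M}_{0,|\sP|} = |\sP|-3 = |F|+|S|-3$; these agree precisely because $\sP$ partitions $[n]$ into $|F|$ pairs and $|S|$ singletons, whence $2|F|+|S|=n$. Thus the dimension constraint is automatically satisfied for every $\sP\in\mathfrak{P}$, and the multinomial formula (\ref{stringEq}) evaluates each integral to $\binom{|\sP|-3}{\langle k_{P_{i1}}+k_{P_{i2}}-1\rangle,\,\langle k_S\rangle}$. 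Substituting back into the displayed sum yields (\ref{mainthmEq}).

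The step I expect to be the main obstacle is the reduction in the second paragraph: justifying carefully, through the projection formula and the gluing behaviour of $\psi$-classes, that integrating the decorated stratum class $gl_\ast f^\ast(\cdots)$ over $\overline{M}_{0,n}$ equals integrating the $\psi$-monomial over the single central factor, and confirming that the classes $\psi_{S_i}$ at the surviving singleton marks restrict to honest $\psi$ classes on that factor without acquiring boundary corrections. Once this identification is secured, the remaining combinatorics — the dimension count and the application of (\ref{stringEq}) — is routine.
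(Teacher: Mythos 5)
Your proposal is correct and follows essentially the same route as the paper: the paper likewise integrates the identity of Theorem \ref{arbintThm} term by term and evaluates each summand $\phi_{\sP}(\psi)\cdot[\Gamma_\sP]$ as a multinomial coefficient on the central factor (the forks being copies of $\overline{M}_{0,3}$), with the same dimension count $n-3-|F| = |\sP|-3$. Your write-up just makes explicit the projection-formula and gluing-pullback steps that the paper leaves implicit.
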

\begin{proof}
The proof of this is same as for theorem (\ref{arbintThm}) except in the last part of evaluation of $ \phi_{\sP} \left( \psi \right)\cdot \left[  \Gamma_{{\sP}} \right] $. Here when $\sum k_i = n-3$, this evaluation gives\\  
$$
\phi_{\sP} \left( \psi \right)\cdot \left[  \Gamma_{{\sP}} \right] 
$$
$$=(-1)^{s+\frac{t}{2}} {{n-3-(s-\frac{t}{2})}\choose{k_{i_1}+k_{i_2}-1, \ldots,k_{i_{t+1}}-1,\ldots,k_{i_{s}}-1,k_{i_{s+1}},\ldots, k_{i_r}}}$$
$$=(-1)^{|F|} {{n-3-|F|}\choose{k_{i_1}+k_{i_2}-1, \ldots,k_{i_{s}}+k_{j_s}-1,k_{i_{s+1}},\ldots, k_{i_r}}}$$
as $k_{i_{r+1}}=\ldots= k_{i_{n}}=0$
$$= (-1)^{|F|}   {{|\sP|-3}\choose{ \langle {k_{P_{i1}}+k_{P_{i2}}-1} \rangle,\langle k_{S}\rangle}}. $$
\end{proof}
We can reduce the complexity of the computation of ($\ref{mainthmEq}$) if we can remove from $\mathfrak{P}$ those partitions $\mathcal{P}$'s whose graphs evaluate to $0$ when intersected with $\phi(\psi)$.
Also, we can collect together terms that come from permuting the marked points $j_1,j_2,...,j_{s-t}$ in the partition $\mathfrak{P}$ as all these terms evaluate to the same value as $k_{j_i}=0$ for all these $j_i$.

Form a new set $\mathfrak{P}'$ in the following way: 
Make the powerset $\mathfrak{R}$ of $[r]$, where $r$ denotes the number of $\psi$'s with non-zero exponent in the $\psi$-monomial. For each set $\sR \in \mathfrak{R}$, form all subsets $\mathcal{P}'$ of $\sR$ whose elements have cardinality 2 or 1 with the upper bound of number of subsets of cardinality 2 fixed at $\floor{\frac{n}{2}}$. Call $\mathfrak{P}'$ the set of all $\mathcal{P}'$.
This set $\mathfrak{P}'$ can also be obtained from $\mathfrak{P}$ via the following map: 
Given a partition $\mathcal{P}$, project to a $\mathcal{P}'$ by forgetting all points $S_i \in \mathcal{P}$ and in a $P_j \in \mathcal{P}$ forget a point $P_{ji}$ if $k_{P_{ji}}=0$. More formally, $\mathcal{P}'= \{P_1',P_2',..,P_i',S_1',S_2',..S_j'\}$ where $P_i' = P_i$ if $k_{Pi1}>0$ and $k_{Pi2}>0$; $S_i' = P_i \setminus \{P_{ij}\}$ if $k_{P_{ij}}=0$. This is an onto map. Each $P_j^{'}$ has cardinality 2, and each $S_i{'}$ has cardinality 1. Denote by $\mathcal{P}'^c$ the set $[r]\setminus P'_1 \cup ...\cup P'_i\cup S_1'\cup S_2' \cup..\cup S_j'$.
\begin{cor}\label{mainThm1}
With $\mathfrak{P}'$ as defined above, for $n\geq 5$ we have:
\begin{equation}
\begin{multlined}
\int_{ \overline{M}_{0,n  }} \hat \psi_1^{k_1}\hat \psi_2^{k_2}\hat \psi_3^{k_3}...\hat \psi_r^{k_r} =\\
\sum_{\mathcal{P}' \in \mathfrak{P}'} (-1)^{s+\frac{t}{2}} \frac{(n-r)!}{(n-r-s+t)!} {{n-3-(s-\frac{t}{2})}\choose{k_{i_1}+k_{i_2}-1,..,k_{i_{t+1}}-1,..,k_{i_{s}}-1,k_{{c_1}},.., k_{c_q}}}
\end{multlined}
\end{equation}
where $t= 2|\{ P_i^{'} \}|$, $s= |\{ S_i{'}  \}| + t$ and $\{{c_1},..,{c_q} \}=\mathcal{P}'^c$
 
\end{cor}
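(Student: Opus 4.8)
The plan is to derive this directly from Corollary \ref{mainThm} by carrying out the two reductions announced just before the statement: first discard the partitions whose summand vanishes, then collapse each fiber of the surjection $\mathfrak{P}\twoheadrightarrow\mathfrak{P}'$ to a single summand weighted by the number of partitions lying over it. For the first step I would isolate the vanishing terms of (\ref{mainthmEq}): in the summand $(-1)^{|F|}\binom{|\sP|-3}{\langle k_{P_{i1}}+k_{P_{i2}}-1\rangle,\langle k_S\rangle}$, a two-element block $P_j=\{a,b\}$ with $k_a=k_b=0$ produces a multinomial entry $k_a+k_b-1=-1$, so that coefficient is zero by the standard convention. Hence only those $\sP\in\mathfrak{P}$ in which every two-element block contains at least one mark of nonzero exponent can contribute, and these are exactly the partitions in the image of the projection $\sP\mapsto\sP'$ defined above.

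Next I would show that the summand of (\ref{mainthmEq}) is constant along each fiber of $\sP\mapsto\sP'$. Fixing $\sP'\in\mathfrak{P}'$ and any surviving preimage $\sP$, the two-element blocks of $\sP$ split into the $t/2$ pairs of nonzero marks recorded by the $P_i'$ and the $s-t$ blocks that join a nonzero mark (recorded by an $S_i'$) to a zero-exponent mark; thus $|F|=s-t/2$ depends only on $\sP'$, and since $t=2|\{P_i'\}|$ is even we have $(-1)^{|F|}=(-1)^{s-t/2}=(-1)^{s+t/2}$. The multinomial top $|\sP|-3=n-3-|F|$ is likewise independent of the chosen preimage. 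Finally the entries contributed by zero-exponent singletons of $\sP$ are all $0$, and since a zero entry contributes $0!=1$ and leaves the multinomial value unchanged, the whole coefficient collapses to $(-1)^{s+t/2}\binom{n-3-(s-t/2)}{k_{i_1}+k_{i_2}-1,\ldots,k_{i_{t+1}}-1,\ldots,k_{i_s}-1,k_{c_1},\ldots,k_{c_q}}$, a quantity depending only on $\sP'$.

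The heart of the argument is then the fiber count. Given $\sP'$, I would reconstruct an admissible surviving $\sP$ by deciding, for each of the $s-t$ marks recorded by the singletons $S_i'$, which zero-exponent mark it is paired with, and declaring every remaining zero-exponent mark a singleton of $\sP$; pairing two zero marks is forbidden because it reintroduces a vanishing term, so no other surviving preimage exists. As there are $n-r$ zero-exponent marks and the $S_i'$ are distinguishable (each carries a specific nonzero mark), the reconstructions are in bijection with injections from the $s-t$ slots into the $n-r$ zero marks, giving $\frac{(n-r)!}{(n-r-(s-t))!}=\frac{(n-r)!}{(n-r-s+t)!}$ of them; this falling factorial is also where the realizability bound $\floor{n/2}$ on the number of two-element blocks enters, since the fiber is nonempty precisely when $n-r\ge s-t$.

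Assembling the pieces, I would rewrite (\ref{mainthmEq}) as a sum over $\mathfrak{P}'$ of the common fiber value times the fiber size, which reproduces the asserted formula. The step I expect to demand the most care is the fiber count: one must verify that the surviving preimages correspond bijectively to ordered choices of zero-exponent partners for the $S_i'$ with all leftover zero marks forced to be singletons, that distinct $\sP'$ never share a preimage (so the fibers genuinely partition the surviving terms), and that the realizability constraint is faithfully encoded by the falling factorial $\frac{(n-r)!}{(n-r-s+t)!}$ so that non-existent fibers automatically contribute $0$.
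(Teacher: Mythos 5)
Your proposal is correct and follows essentially the same route as the paper: both arguments group the summands of (\ref{mainthmEq}) by their image under the projection $\mathfrak{P}\to\mathfrak{P}'$, discard the partitions containing a fork with both exponents zero, observe that the remaining summand depends only on $\sP'$, and weight it by the fiber size $\frac{(n-r)!}{(n-r-s+t)!}$ coming from the choices of zero-exponent partners for the $S_i'$. If anything, your treatment of the fiber count (injections of the $s-t$ slots into the $n-r$ zero-exponent marks, with leftover zero marks forced to be singletons since pairing two of them reintroduces a vanishing term) is more explicit than the paper's one-line assertion.
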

\begin{proof}
Corresponding to a partition $\mathcal{P}$, form a corresponding decorated graph for $\mathcal{P}^{'}$ by uncoloring any half-edges on the central node, and `forgetting' the $j_i$'s on uncolored half-edges on the forks as discussed above. This corresponds to $\mathcal{P}' = \lbrace P_1^{'}, P_2^{'}, \ldots, P_{t/2}^{'}, S_1^{'},\ldots, S_{s-t}^{'} \rbrace$, where $P_i^{'}$'s correspond to nodes with both half-edges colored, and $S'_j = i_{t+j}$ in the decorated graph of $\mathcal{P}$ in figure (\ref{fig:image13}). Now consider intersection of this graph with $\phi_{\sP}(\psi)$ as defined earlier in (\ref{psiFn1stThm}) :\\ 
$\phi_{\sP}(\psi) \cdot \begin{pmatrix}
\includegraphics[width=2in]{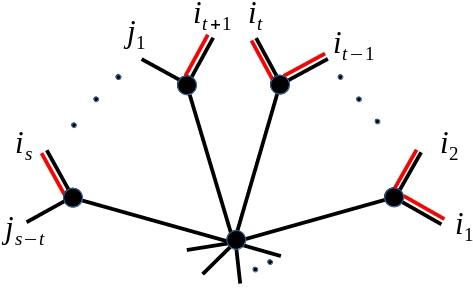}
\end{pmatrix}
$
$$
\noindent \cong  \overline{M}_{0,3} \times \overline{M}_{0,3},\ldots,\overline{M}_{0,3} \times \phi(\psi). \overline{M}_{0,n-3-(s-\frac{t}{2})}
$$
$$
=(-1)^{s+\frac{t}{2}} {{n-3-(s-\frac{t}{2})}\choose{k_{i_1}+k_{i_2}-1, ..,k_{i_{t-1}}+k_{i_t}-1,k_{i_{t+1}}-1,..,k_{i_{s}}-1,k_{{c_1}},.., k_{c_q}}}
$$
As there are $\frac{(n-r)!}{(n-r-s+t)!}$ ways of choosing the uncolored half-edges on the forks, corresponding to $j_i$'s, the term evaluates to 
$$
=(-1)^{s+\frac{t}{2}} \frac{(n-r)!}{(n-r-s+t)!} {{n-3-(s-\frac{t}{2})}\choose{k_{i_1}+k_{i_2}-1, ..,k_{i_{t+1}}-1,..,k_{i_{s}}-1,k_{{c_1}},.., k_{c_q}}}
$$
\end{proof}
\subsection{Generating Function for the top intersections}\label{secGenfn}
We start with the generating function- Witten potential(\cite{kock})- also called, in topological gravity, \textit{the total free energy}. The correlation functions are defined as intersection numbers on the moduli space of stable n-pointed curves (here for genus $0$) as
 
$$
\langle \tau_{k_1} \ldots \tau_{k_n} \rangle :=\int_{ \overline{M}_{0,n}}  \psi_1^{k_1} \psi_2^{k_2} \ldots  \psi_n^{k_n} 
$$   
Collecting all tau's with equal exponent, we can write 
$ \langle \tau_{k_1} \ldots \tau_{k_n} \rangle = \\
\langle \tau_{0}^{s_0} \tau_{1}^{s_1} \tau_{2}^{s_2} \ldots \tau_{m}^{s_m} \rangle $. Now, define $ \textbf{s} = \left(  s_0, s_1, \ldots \right)  $, \\ and 
$\langle \mathbf{{\tau}^{s}} \rangle := \langle \tau_{0}^{s_0} \tau_{1}^{s_1} \tau_{2}^{s_2} \ldots \tau_{m}^{s_m} \rangle $. So, for each sequence \textbf{s}, there is a correlation function $\langle {\mathbf{\tau}}^{\mathbf{s}} \rangle$; and $ \vert \mathbf{s} \vert := \sum{s_i} $ is the number of marks $n$. For the generating function, all these correlation functions are collected and used as coefficients in a formal power series. Using notation $ {\mathbf{t}}^{\mathbf{s}} = \displaystyle \prod_{i=0}^{\infty} t_{i}^{s_i} $, and $ \mathbf{s!} = \displaystyle \prod_{i=0}^{\infty} {s_{i}!}  $, the generating function is
\begin{equation}\label{WittenPot}
F(\mathbf{t}) := \sum_{\mathbf{s}} \frac{{\mathbf{t}}^{\mathbf{s}}}{\mathbf{s!}} \langle {\mathbf{\tau}}^{\mathbf{s}} \rangle 
\end{equation}
$$  
= (1) \frac{t_0^3t_1}{3!} 
+ (1) \frac{t_0^4t_2}{4!} + (2) \frac{t_0^3t_1^2}{3!2!}
+ (1) \frac{t_0^5t_3}{5!}+ (3) \frac{t_0^4t_1t_2}{4!}+ (6) \frac{t_0^3t_1^3}{3!3!} +\ldots\ldots$$\\
where the coefficients of appropriate terms give the intersection numbers $\int_{ \overline{M}_{0,n}}  \psi_1^{k_1} \psi_2^{k_2} \ldots  \psi_n^{k_n}$. Observe that  the total codimension of the integrand in $\langle {\mathbf{\tau}}^{\mathbf{s}}\rangle$ is $ \sum is_i $, so $  |s| - 3 = \sum i s_i$. With this generating function, the String equation (\ref{stringEq}) for $\overline{M}_{0,n}$ is encoded in the differential equation -\\
$$\frac{\partial}{\partial t_0} F = \frac{t_{0}^{2}}{2} +  \sum_{i=0}^{\infty} t_{i+1} \frac{\partial}{\partial t_i} F$$\\
\begin{defi}\label{GenPot}
Define a new generating function 
\begin{equation}\label{GenPotEq}
G(\mathbf{t}) := \sum_{\mathbf{s}} \frac{{\mathbf{t}}^{\mathbf{s}}}{\mathbf{s!}} \langle {\hat{\mathbf{\tau}}}^{\mathbf{s}} \rangle   
\end{equation}
where $ \langle {\hat{\mathbf{\tau}}}^{\mathbf{s}} \rangle = \int_{ \overline{M}_{0,n}} \hat \psi_1^{k_1}\hat \psi_2^{k_2} \ldots \hat \psi_n^{k_n}$, and $\sum k_i = n-3$. 
\end{defi}
So, $G(\mathbf{t})$ has as coefficients of the monomials $\frac{{\mathbf{t}}^{\mathbf{s}}}{\mathbf{s!}}$ the intersection numbers $\int_{ \overline{M}_{0,n}} \hat \psi_1^{k_1}\hat \psi_2^{k_2} \ldots \hat \psi_n^{k_n}$ for any value of $n$ and any values of $k_i$'s, with $\sum k_i = n-3$.
\begin{defi}
Given two operators $\mathcal{J}_1 = t_{p}t_{q} \frac{\partial}{\partial t_{r}} $, and $\mathcal{J}_2 = t_{p'}t_{q'} \frac{\partial}{\partial t_{r'}} $, 
the normal ordering of the product, denoted $ : \mathcal{J}_1 \mathcal{J}_2 : $, means that we treat the $t_i$'s and $ \frac{\partial}{\partial t_{{i}}} $'s as commuting variables, and bring all $t_i$'s to the left of $ \frac{\partial}{\partial t_{{i}}} $'s. E.g., here 
$$
\mathcal{J}_1 \mathcal{J}_2 \hspace{0.15cm} = t_{p}t_{q} \frac{\partial}{\partial t_{r}} t_{p'}t_{q'} \frac{\partial}{\partial t_{r'}},
$$ but
$$
: \mathcal{J}_1 \mathcal{J}_2 :\hspace{0.15cm} = t_{p}t_{q} t_{p'}t_{q'} \frac{\partial}{\partial t_{r}} \frac{\partial}{\partial t_{r'}}.
$$
Furthermore, 
$$
: \mathcal{J}_1  +  \mathcal{J}_2 :\hspace{0.15cm} = \hspace{0.15cm} : \mathcal{J}_1 : + : \mathcal{J}_2 :
$$
\end{defi}
It follows from the above definition that normal ordering extends to arbitrary products of operators, and extends by linearity to arbitrary sums (including infinite sums) of operators. In particular, it extends to $\hspace{0.1cm} : e^{\mathcal{J}} :$. 
 
\begin{thm}{\label{generatingFnthm}}
With $G(\textbf{t})$ as defined above, 
\begin{equation}\label{GenPotexp}
G(\textbf{t}) = \hat \sL (F(\textbf{t})) - \frac{t_0^3}{3!} + 2\frac{t_0^3 t_1}{3!} 
\end{equation}
where $\hat \sL = \hspace{0.1cm} : e^{- \sL} :$, and $\hspace{0.1cm} : e^{- \sL} :$ denotes the operator with normal ordering, and 
\begin{equation}\label{LInOp}
\sL = \frac{1}{2} \sum_{i,j=0}^{\infty} t_{i}t_{j} \frac{\partial}{\partial t_{{i}+{j}-1}} \end{equation}
\end{thm}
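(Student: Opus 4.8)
Here is how I would attack Theorem \ref{generatingFnthm}. The plan is to expand the normal-ordered operator $\hat\sL = \,:\!e^{-\sL}\!:$ as a power series, apply it to $F(\mathbf t)$ term by term, and identify the resulting coefficients with the partition sum of Corollary \ref{mainThm}. First I would write
$$
\hat\sL \;=\; \sum_{m=0}^{\infty}\frac{(-1)^m}{m!\,2^m}\sum_{i_1,j_1,\dots,i_m,j_m} t_{i_1}t_{j_1}\cdots t_{i_m}t_{j_m}\,\frac{\partial}{\partial t_{i_1+j_1-1}}\cdots\frac{\partial}{\partial t_{i_m+j_m-1}},
$$
so that a single normal-ordered monomial $:\!\sL^m\!:$ acts on a correlator term $\tfrac{\mathbf t^{\mathbf r}}{\mathbf r!}\langle\tau^{\mathbf r}\rangle$ of $F$ by selecting $m$ marks of prescribed exponents $i_l+j_l-1$ (what the derivatives do) and replacing each such mark by a pair of marks with exponents $i_l$ and $j_l$ (what the multiplications do). This is exactly the inverse of the fork contraction $\{k_{P_{j1}},k_{P_{j2}}\}\mapsto k_{P_{j1}}+k_{P_{j2}}-1$ occurring in Corollary \ref{mainThm}: applying $m$ factors of $\sL$ to an intersection number on $\overline{M}_{0,n-m}$ reassembles the contribution of an $m$-fork $\sP$-graph to an intersection number on $\overline{M}_{0,n}$.

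Next I would fix a target and extract a coefficient. Writing $\mathbf d$ for the multiset of contracted exponents $i_l+j_l-1$ and $\mathbf e$ for the multiset of leg exponents $\{i_l,j_l\}$, a direct computation gives that the coefficient of $\tfrac{\mathbf t^{\mathbf s}}{\mathbf s!}$ in $\hat\sL(F)$, namely $\langle\hat\tau^{\mathbf s}\rangle$, equals
$$
\sum_{m=0}^{\infty}\frac{(-1)^m}{m!\,2^m}\sum_{i_1,j_1,\dots,i_m,j_m}\frac{\mathbf s!}{(\mathbf s-\mathbf e)!}\,\langle\tau^{\mathbf s+\mathbf d-\mathbf e}\rangle,
$$
the inner sum running over ordered tuples whose legs are available in $\mathbf s$ and with $i_l+j_l-1\geq 0$. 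Each source correlator $\langle\tau^{\mathbf s+\mathbf d-\mathbf e}\rangle$ lives on $\overline{M}_{0,n-m}$, so by the string formula \eqref{stringEq} it is exactly the multinomial $\binom{|\sP|-3}{\langle k_{P_{i1}}+k_{P_{i2}}-1\rangle,\langle k_S\rangle}$ attached to an $m$-fork $\sP$-graph in Corollary \ref{mainThm}, and the sign $(-1)^m$ matches $(-1)^{|F|}$. What remains is to match the combinatorial weights, and this is where the real work lies.

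The heart of the argument is the identity, for each fixed fork-type, that
$$
\frac{1}{m!\,2^m}\,\bigl(\#\text{ ordered tuples of that type}\bigr)\;\frac{\mathbf s!}{(\mathbf s-\mathbf e)!}
$$
equals the number of labeled $\sP$-graphs of that type, so that every $\sP\in\mathfrak P$ is counted with total weight exactly $1$. Here $m!$ absorbs the ordering of the forks, $2^m$ the ordering of the two legs of each fork, and $\tfrac{\mathbf s!}{(\mathbf s-\mathbf e)!}$ records the ordered choice of which marks of each exponent become legs. I expect the main obstacle to be the bookkeeping when coincidences occur: two forks sharing the same leg-exponents, or a single fork whose two legs carry equal exponent (a pair of marks of the same exponent contracted together). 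In those cases the number of ordered tuples is no longer the full $m!\,2^m$, and one must verify that the deficit is compensated exactly by the falling-factorial $\tfrac{\mathbf s!}{(\mathbf s-\mathbf e)!}$; for one fork with equal legs of exponent $a$, for instance, one finds $\tfrac12 s_a(s_a-1)=\binom{s_a}{2}$, precisely the number of admissible contracted pairs. I would organize this uniformly by passing to the $\sP_k$-graph picture, where the surjection from labeled $\sP$-graphs onto exponent-decorated $\sP_k$-graphs exhibits these multiplicities explicitly, and then compare the two expansions term by term.

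Finally I would dispose of the correction $-\tfrac{t_0^3}{3!}+2\tfrac{t_0^3 t_1}{3!}$. Since $\overline{M}_{0,(\frac12)^n}$ exists only when $\tfrac n2>2$, i.e.\ for $n\geq 5$, the genuine generating function $G$ carries no contribution at $n=3$ or $n=4$, whereas the purely formal operator $\hat\sL(F)$ does. A short direct computation, using that $F$ contains $\tfrac{t_0^3}{3!}$ (from $\langle\tau_0^3\rangle=1$), that $\sL\!\left(\tfrac{t_0^3}{3!}\right)=\tfrac{t_0^3 t_1}{2}$, and that higher powers of $\sL$ raise the $t$-degree too far to contribute at these orders, shows that $\hat\sL(F)$ has coefficient $1$ at $\tfrac{t_0^3}{3!}$ (only the $m=0$ term survives) and coefficient $-2$ at $\tfrac{t_0^3 t_1}{3!}$ (namely $1$ from $m=0$ together with $-3$ from $m=1$). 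Subtracting $\tfrac{t_0^3}{3!}$ and adding $2\tfrac{t_0^3 t_1}{3!}$ removes exactly these two spurious terms, yielding $G(\mathbf t)$ and completing the proof.
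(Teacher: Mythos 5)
Your proposal is correct and follows essentially the same route as the paper: expand $\hat{\sL}=\,:\!e^{-\sL}\!:$ term by term, match each summand of $:\!(-1)^m\sL^m/m!\,:$ acting on $F$ with an $m$-fork contribution of Corollary \ref{mainThm} by passing to the $\sP_k$-graph (exponent-decorated) picture, verify that the $\frac{1}{m!\,2^m}$ and falling-factorial weights reproduce the count $C_{\sP_k}$ of $\sP$-graphs over each $\sP_k$-graph (the paper packages the coincidence bookkeeping as $1/|Aut(\hat{\sP}_k)|$), and remove the spurious $n=3,4$ terms $\frac{t_0^3}{3!}$ and $-2\frac{t_0^3t_1}{3!}$ exactly as you compute. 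No substantive differences or gaps.
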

Before we prove the above theorem, consider the $\hat{\psi}$-monomial $\int_{ \overline{M}_{0,n}} \hat \psi_1^{n-3}$. It is the coefficient of $t$-monomial $\frac{t_0^{(n-1)}t_{n-3}}{(n-1)!}$ in $G(\textbf{t})$. When we apply the operator $\hat \sL = 1 - \sL + : \frac{{\sL}^2}{2!}: - \ldots$ to $F(\textbf{t})$, only the  terms with the following $t$-monomials in $F(\textbf{t})$ contribute to the term with $t$-monomial $\frac{t_0^{(n-1)}t_{n-3}}{(n-1)!}$ in $ \hat \sL (F(\textbf{t})) $:
$\frac{t_0^{(n-1)}t_{n-3}}{(n-1)!}$ and $\frac{t_0^{(n-2)}t_{n-4}}{(n-2)!}$.
The corresponding operations happen as follows. 
The first term of $ \hat \sL$, which is $1$, acts on $F(\textbf{t})$ to produce the $t$-monomial $\frac{t_0^{(n-1)}t_{n-3}}{(n-1)!}$ as it is. Then, among the summands in $- \sL$, the only operator that produces the $t$-monomial $\frac{t_0^{(n-1)}t_{n-3}}{(n-1)!}$ is $-t_0t_{n-3} \frac{\partial}{\partial t_{n-4}}$, which acts as follows. 
$$
-t_0t_{n-3} \frac{\partial}{\partial t_{n-4}} \left(  \frac{t_0^{(n-2)}t_{n-4}}{(n-2)!} \right)  = -(n-1) \frac{t_0^{(n-1)}t_{n-3}}{(n-1)!} 
.$$ 

No other term in $F(\textbf{t})$ contributes to the coefficient of the $t-$monomial $\frac{t_0^{(n-1)}t_{n-3}}{(n-1)!}$ in $ \hat \sL(F(\textbf{t}))$. As the monomial $\frac{t_0^{(n-1)}t_{n-3}}{(n-1)!}$ has the coefficient $\int_{ \overline{M}_{0,n}}  \psi_1^{n-3}$, and the monomial $\frac{t_0^{(n-2)}t_{n-4}}{(n-2)!}$ has the coefficient $\int_{ \overline{M}_{0,{n-1}}} \psi_1^{n-4} $ in $F(\textbf{t})$, the coefficient of $\frac{t_0^{(n-1)}t_{n-3}}{(n-1)!}$ in  $ \hat \sL (F(\textbf{t})) $ is \\ $ \int_{ \overline{M}_{0,n}}  \psi_1^{n-3} - (n-1) \int_{ \overline{M}_{0,{n-1}}} \psi_1^{n-4} $, which equals $\int_{ \overline{M}_{0,n}} \hat \psi_1^{n-3}$ from Corollary (\ref{mainThm}).
Observe that both the contributions correspond to the two types of $\mathcal{P}$-graphs that make non-zero contributions to $\int_{\overline{M}_{0,n}} \hat \psi_1^{n-3}$ in corollary (\ref{mainThm}). The first are of the type with no forks, and the second are of the type with one fork. 
\begin{defi}\label{pkDef}
For each $\mathcal{P}$-graph, the corresponding $\mathcal{P}_{k}$-graph is defined by replacing each $ i \in [n] $ on the $\mathcal{P}$-graph by $ k_i $.  
\end{defi}
Clearly the map $ \{\mathcal{P}$-$graphs\} \to \{\mathcal{P}_{k}$-$graphs\} $ is a surjection.
\begin{lemma}{\label{p-pk-Map}}
In genus $0$, for a given $\langle \tau_{k_1} \ldots \tau_{k_n} \rangle =\int_{ \overline{M}_{0,n}}  \psi_1^{k_1} \psi_2^{k_2} \ldots  \psi_n^{k_n} $ as defined above, let 
$ \textbf{s} = \left(  s_0, s_1, \ldots s_n\right)  $ be such that $ \langle \tau_{k_1} \ldots \tau_{k_n} \rangle = 
\langle \tau_{0}^{s_0} \tau_{1}^{s_1} \tau_{2}^{s_2} \ldots \tau_{n}^{s_n} \rangle $. 
Consider a $\mathcal{P}_{k}$-graph with $m$ forks with $q$ distinct $k_i$'s appearing on the forks; let such $k_i$'s be $\left\lbrace  k_{1}, k_{2}, \ldots k_{q} \right\rbrace $. Let $l_{i}$ be the number of times a given $k_i$ appears on any fork on the $\mathcal{P}_{k}$-graph. Then the number of $\mathcal{P}$-graphs that map to this $\mathcal{P}_{k}$-graph is given by:
\begin{equation}\label{noP-Pkgarphs}
 \frac{1}{|Aut(\hat{\mathcal{P}}_{k})|}  \frac{s_{k_1}!}{\left( s_{k_1}-l_{1}\right)! } \frac{s_{k_2}!}{\left( s_{k_2}-l_{2}\right)! } \ldots \frac{s_{k_q}!}{\left( s_{k_q}-l_{q}\right)! } 
\end{equation}
where $ |Aut(\hat{\mathcal{P}}_{k})|$ is the number of automorphisms of the subgraph of the $\mathcal{P}_{k}$-graph obtained by removing half-edges on the central node.
\end{lemma}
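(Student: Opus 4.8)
The plan is to prove the formula by an orbit-counting argument for a \emph{free} group action. The surjection $\{\mathcal{P}\text{-graphs}\}\to\{\mathcal{P}_{k}\text{-graphs}\}$ of Definition \ref{pkDef} forgets the point labels while recording only the exponents $k_i$, so counting the fibre over a fixed $\mathcal{P}_{k}$-graph amounts to counting all ways of lifting each exponent-decorated half-edge back to an honest marked point carrying that exponent, subject to using each of the $n$ points exactly once. The first observation I would record is that this lifting problem is entirely controlled by the forks: in a partition $\mathcal{P}$ the singletons $S_j$ form an unordered set of one-element blocks, so once the fork-blocks (the two-element subsets) have been chosen, the remaining points are forced to be the central-node singletons and contribute no further choices. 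Hence it suffices to count the assignments of marked points to the fork half-edges, which is exactly why only the fork data $k_1,\dots,k_q$ and $l_1,\dots,l_q$ enter the formula.

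Next I would rigidify the $\mathcal{P}_{k}$-graph: fix once and for all a labelling of its $m$ forks and an ordering of the two half-edges in each fork, turning the fork half-edges into a fixed set of positions, each carrying a required exponent. Let $X$ be the set of injective assignments of distinct marked points to these positions that respect the exponents. Grouping positions by their exponent value, for each distinct value $k_i$ there are $l_i$ positions to be filled injectively from the $s_{k_i}$ points of exponent $k_i$, and these choices are independent across the $q$ values; thus
\[
|X| \;=\; \prod_{i=1}^{q}\frac{s_{k_i}!}{(s_{k_i}-l_i)!},
\]
the $i$-th factor being the falling factorial $s_{k_i}(s_{k_i}-1)\cdots(s_{k_i}-l_i+1)$, the number of injections of $l_i$ labelled positions into $s_{k_i}$ points.

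Finally I would let $G=Aut(\hat{\mathcal{P}}_{k})$ act on the positions — permuting forks carrying identical exponent pairs and swapping equal-labelled half-edges within a single fork — and hence on $X$. I would check that the map sending an assignment to the partition it defines (fork positions $\mapsto$ two-element blocks, leftover points $\mapsto$ singletons) is constant on $G$-orbits, is surjective onto the fibre in question, and identifies two assignments precisely when they lie in the same $G$-orbit. The decisive point is that the $G$-action on $X$ is free: an automorphism fixing an assignment would have to send each position to a position carrying the same marked point, and since the points are pairwise distinct this forces it to fix every position, whence it is the identity. Freeness makes every orbit have exactly $|Aut(\hat{\mathcal{P}}_{k})|$ elements, so the number of $\mathcal{P}$-graphs in the fibre is $|X|/|Aut(\hat{\mathcal{P}}_{k})|$, which is $(\ref{noP-Pkgarphs})$. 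I expect the main obstacle to be the two bookkeeping claims that make this clean — that the central-node singletons genuinely add no factor, and that the orbit equivalence is exactly $G$-equivalence rather than a coarser relation — both of which hinge on the marked points being distinguishable.
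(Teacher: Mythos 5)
Your proposal is correct and follows essentially the same route as the paper: the paper likewise counts ordered assignments of marked points to the fork half-edges (arriving at the same falling factorials $\prod_i s_{k_i}!/(s_{k_i}-l_i)!$ after discarding the central-node choices) and then divides by $2^d\,(j_1!\cdots j_f!)=|Aut(\hat{\mathcal{P}}_{k})|$ for the fork symmetries. Your only addition is the explicit justification that dividing by $|Aut(\hat{\mathcal{P}}_{k})|$ is legitimate via a free group action on the rigidified assignments, which the paper asserts without comment; this is a welcome refinement but not a different argument.
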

\begin{proof}
Consider a $\mathcal{P}_{k}$-graph with $m$ forks such that the number of $k_i$'s appearing on the forks is $q$. Then, if all $n$ half-edges are given ordering, the number of corresponding $ \sP $-graphs would be 
$$
s_{0}! s_{1}! s_{2}! \ldots s_{n}!.
$$
Now, we divide by the permutations of half-edges on the central node to get 
$$
\frac{s_{0}!}{\left( s_{0}-l_{0}\right)! } \frac{s_{1}!}{\left( s_{1}-l_{1}\right)! } \frac{s_{2}!}{\left( s_{2}-l_{2}\right)! } \ldots \frac{s_{n}!}{\left( s_{n}-l_{n}\right)! }.
$$
Now, as only $q$ number of $k_i$'s appear on the forks, $l_i = 0$ for $i > q$, so 
$$
\frac{s_{0}!}{\left( s_{0}-l_{0}\right)! } \frac{s_{1}!}{\left( s_{1}-l_{1}\right)! }  \ldots \frac{s_{n}!}{\left( s_{n}-l_{n}\right)! } = \frac{s_{k_1}!}{\left( s_{k_1}-l_{1}\right)! } \frac{s_{k_2}!}{\left( s_{k_2}-l_{2}\right)! } \ldots \frac{s_{k_q}!}{\left( s_{k_q}-l_{q}\right)! }. 
$$
Further we need to divide by permutations of half-edges on the forks. Let $ j_1, j_2, \ldots, j_f $ be the number of forks with the same set of $k_i$'s on them; and let $d$ be the number of forks with both $k_i$'s same on that fork. Then, we divide by $2^d \left(  j_1! j_2! \ldots j_f!\right)   $ to get 
$$
\left( \frac{1}{2^d \left(  j_1! j_2! \ldots j_f!\right)} \right) \frac{s_{k_1}!}{\left( s_{k_1}-l_{1}\right)! } \frac{s_{k_2}!}{\left( s_{k_2}-l_{2}\right)! } \ldots \frac{s_{k_q}!}{\left( s_{k_q}-l_{q}\right)! }.
$$ 
Observe that the number $ \left(2^d \left(  j_1! j_2! \ldots j_f!\right) \right) $ is the number of automorphisms of the subgraph of $\mathcal{P}_{k}$-graph obtained by removing the half-edges on the central node; denote this subgraph as $ \hat{\mathcal{P}}_{k} $. Then the number of $\mathcal{P}$-graphs that map to this $\mathcal{P}_{k}$-graph can be rewritten as:
$$
 \frac{1}{|Aut(\hat{\mathcal{P}}_{k})|}  \frac{s_{k_1}!}{\left( s_{k_1}-l_{1}\right)! } \frac{s_{k_2}!}{\left( s_{k_2}-l_{2}\right)! } \ldots \frac{s_{k_q}!}{\left( s_{k_q}-l_{q}\right)! }
$$ 
which we denote by $ C_{\sP_{k}} $.
\end{proof}
The reason for organizing $ C_{\sP_{k}} $ as in (\ref{noP-Pkgarphs}) will become clear in the proof of theorem (\ref{generatingFnthm}).
\begin{lemma}
With the definitions and notations above, Corollary (\ref{mainThm}) can be rewritten as :
\begin{equation} \label{pkClosedformula}
\begin{multlined}
\int_{ \overline{M}_{0,n}} \hat \psi_1^{k_1}\hat \psi_2^{k_2} \ldots \hat \psi_n^{k_n} =\\
\sum_{\sP_{k} \in \mathfrak{Q}} (-1)^{m} C_{\sP_{k}} 
 \int_{ \overline{M}_{0,(n-m)}} \psi_{\bullet_1}^{k_{i_{1}}+k_{i_{2}}-1}\ldots \psi_{\bullet_{m}}^{k_{i_{2m-1}}+k_{i_{2m}}-1} \psi_{i_{2m+1}}^{k_{i_{2m+1}}}\ldots ... \psi_{i_r}^{k_{i_r}}
\end{multlined}
\end{equation}
where $m$ is number of forks on the $\mathcal{P}_{k}$-graph and $C_{\sP_{k}} $ is the number of $\mathcal{P}$-graphs that map to this $\mathcal{P}_{k}$-graph, and $ \mathfrak{Q} $ is the set of all $\mathcal{P}_{k}$-graphs.
\end{lemma}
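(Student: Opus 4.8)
The plan is to obtain (\ref{pkClosedformula}) from Corollary \ref{mainThm} purely by reindexing its sum along the fibers of the surjection $\{\mathcal{P}\text{-graphs}\}\to\{\mathcal{P}_k\text{-graphs}\}$ of Definition \ref{pkDef}. The key observation I would record first is that the summand of (\ref{mainthmEq}), namely $(-1)^{|F|}\binom{|\mathcal{P}|-3}{\langle k_{P_{i1}}+k_{P_{i2}}-1\rangle,\langle k_S\rangle}$, is determined entirely by the $k$-decorated graph and not by the underlying labels of $[n]$: the sign depends only on the number of forks, and the multinomial coefficient, being symmetric in its lower entries, depends only on the multiset of exponents $k_{P_{i1}}+k_{P_{i2}}-1$ attached to the forks and $k_{S_j}$ attached to the singletons. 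Since the associated $\mathcal{P}_k$-graph is precisely the record of this data, any two $\mathcal{P}$-graphs lying over the same $\mathcal{P}_k$-graph contribute identical summands.

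With this in hand, I would group the sum over $\mathfrak{P}$ by the fibers of the surjection, so that each $\mathcal{P}_k$-graph is counted with the cardinality of its fiber. That cardinality is exactly $C_{\sP_k}$ computed in Lemma \ref{p-pk-Map}, so, writing $m=|F|$ for the number of forks, the right-hand side of (\ref{mainthmEq}) becomes $\sum_{\sP_k\in\mathfrak{Q}}(-1)^m C_{\sP_k}\binom{|\mathcal{P}|-3}{\ldots}$.

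It then remains to re-express the multinomial coefficient as a top $\psi$-intersection, which is where (\ref{stringEq}) enters. Counting marked points, a $\mathcal{P}_k$-graph with $m$ forks and $|S|$ singletons has $|\mathcal{P}|=m+|S|=n-m$ parts, so the central factor is a curve in $\overline{M}_{0,n-m}$; the exponents $k_{i_{2j-1}}+k_{i_{2j}}-1$ at the fork-nodes together with the singleton exponents sum to $\big(\sum_i k_i\big)-m=(n-m)-3$. The intersection is therefore of top degree on $\overline{M}_{0,n-m}$, and by (\ref{stringEq}) it equals exactly the multinomial coefficient above; substituting this yields (\ref{pkClosedformula}). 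The argument is otherwise mechanical, and the only point that genuinely needs care is the reconciliation in the second step: one must check that the summand really is constant along each fiber and that the fiber size matches $C_{\sP_k}$, i.e.\ that the automorphism factor built into Lemma \ref{p-pk-Map} correctly absorbs the overcounting coming from permuting forks and swapping the two half-edges of a fork. Graphs carrying a fork whose two $k$-values both vanish contribute a factor $\psi^{-1}=0$ on both sides and may be retained in the sums harmlessly.
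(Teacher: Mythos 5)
Your proposal is correct and follows essentially the same route as the paper, which also proves the lemma by reorganizing the sum in (\ref{mainthmEq}) along the fibers of the surjection from $\mathcal{P}$-graphs to $\mathcal{P}_k$-graphs, with the fiber cardinality $C_{\sP_k}$ supplied by Lemma \ref{p-pk-Map}. You in fact supply more detail than the paper does, notably the check that the summand is constant on each fiber and the explicit reconversion of the multinomial coefficient into a top-degree $\psi$-integral on $\overline{M}_{0,n-m}$ via (\ref{stringEq}).
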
 
\begin{proof}
This version of closed formula for $ \int_{ \overline{M}_{0,n}} \hat \psi_1^{k_1}\hat \psi_2^{k_2} \ldots \hat \psi_n^{k_n} $ is just a reorganization of (\ref{mainthmEq}) using $\mathcal{P}_{k}$-graphs instead of $\mathcal{P}$-graphs. As the map $ \{\mathcal{P}$-$graphs\} \to \{\mathcal{P}_{k}$-$graphs\} $ is a surjection, we get all the terms in (\ref{mainthmEq}).
\end{proof}
Now, for a general $\mathcal{P}_{k}$-graph with $m$ forks shown below, we define the following operator (which appears in $ \hat \sL$):
\begin{equation}\label{opterm}
\mathcal{D}_{\sP_k} := t_{k_{i_{1}}} t_{k_{i_{2}}} \ldots t_{{k_{i_{2m-1}}}} t_{{k_{i_{2m}}}} \frac{\partial}{\partial t_{{k_{i_{1}}}+{k_{i_{2}}-1}}} \ldots   \frac{\partial}{\partial t_{{k_{i_{2m-1}}}+{k_{i_{2m}}-1}}} 
\end{equation}
\begin{figure}[!htb]
\begin{center}
\includegraphics[width=2.8in]{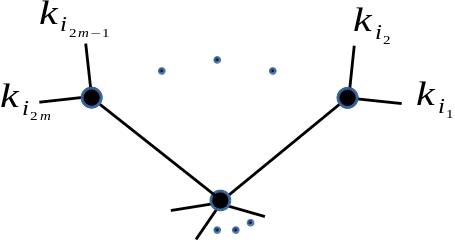}
\end{center}
\caption{${\mathcal{P}}_{k}$-graph with $m$ forks}
\label{PDEOpFig}
\end{figure}
%
By construction, the operators (\ref{opterm}) are in bijection with the $\mathcal{P}_{k}$-graphs. Furthermore, the operator (\ref{opterm}) arises in $ \hat{\sL} $ as a summand in $:(-1)^m\frac{ \sL^m}{m!}: $ with some multiplicity. As part of the proof of theorem, we will show that this multiplicity is $ (-1)^m \frac{1}{|Aut(\hat{\mathcal{P}}_{k})|} $ with $ |Aut(\hat{\mathcal{P}}_{k})| $ as defined in Lemma (\ref{p-pk-Map}). 
%
%
Strategy of proof of theorem (\ref{generatingFnthm}): we will show that for a general $t$-monomial,
its coefficients in $G(\textbf{t})$ and $ \hat \sL (F(\textbf{t})) - \frac{t_0^3}{3!} + 2\frac{t_0^3 t_1}{3!}  $ are equal. For this, we fix a t-monomial, and show a bijection and equality between the summands in the right hand side of (\ref{pkClosedformula}) and the summands in coefficient of the $t$-monomial in $ \hat \sL (F(\textbf{t})) - \frac{t_0^3}{3!} + 2\frac{t_0^3 t_1}{3!}$. This is done via the $\sP_{k}$ graphs as follows. Each $\sP_{k}$ graph tautologically gives a summand in (\ref{pkClosedformula}). Each $\sP_{k}$ graph also gives a term in $\hat{\sL}$ (with some multiplicity) and a monomial in $ F(\textbf{t}) $ whose product is the same $t$-monomial, and its coefficient
in this product agrees with the summand in (\ref{pkClosedformula}). Finally, all the terms in
$ \hat \sL (F(\textbf{t})) - \frac{t_0^3}{3!} + 2\frac{t_0^3 t_1}{3!}  $  are of the form obtained above.
Before the proof, here is an example that illustrates the idea.
\begin{eg}
Consider\\ 
$$ \int_{ \overline{M}_{0,{61}}} \hat \psi_1^{1} \ldots \hat \psi_8^{1} \hat \psi_9^{2} \ldots \hat \psi_{13}^{2}  \hat \psi_{14}^{3} \hat \psi_{15}^{3} \hat \psi_{16}^{4} \hat \psi_{17}^{4} \hat \psi_{18}^{4}  \hat \psi_{19}^{5} \hat \psi_{20}^{5}  \hat \psi_{21}^{6} \hat \psi_{22}^{6}.
$$ 
The corresponding  $t$-monomial in $G(\textbf{t})$ is 
$$\frac{{t_0}^{s_0}{t_1}^{s_1} \ldots {t_l}^{s_l}}{{s_0}!{s_1}! \ldots {s_l}!} = \frac{{t_0}^{39}{t_1}^{8} {t_2}^{5} {t_3}^{2} {t_4}^{3} {t_5}^{2} {t_6}^{2}}  {{39}!{8}! 5! 2! 3! 2! 2!} =: {T}.$$
Now consider the following ${\mathcal{P}}_{k}$-graph in figure (\ref{PDEegFig}) with $m=7$ forks.
\begin{figure}[!htb]
\begin{center}
\includegraphics[width=2in]{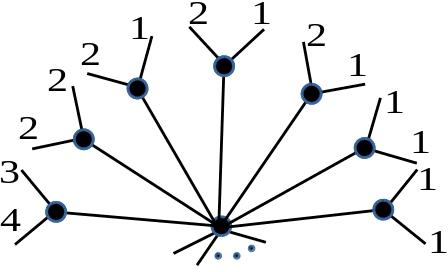}
\caption{${\mathcal{P}}_{k}$-graph with $m=7$ forks}
  \label{PDEegFig}
\end{center}
\end{figure}

The corresponding operator (\ref{opterm}) is
$$
\mathcal{D}_{\sP_k} = t_1^7 t_2^5 t_3 t_4 \frac{\partial^3}{\partial t_{2}^3} \frac{\partial^2}{\partial t_{1}^2} \frac{\partial}{\partial t_{3}} \frac{\partial}{\partial t_{6}}.
$$ 
The coefficient of this operator in $: (-1)^7 \frac{ \sL^7}{7!} :$ is 
$$
 (-1)^7 \left( \frac{ 1}{2^7 7!} \right)  \left( {2^4} {{7}\choose{3,2,1,1}}\right)\\
 = (-1)^7 \left(  \frac{1}{2^3 3! 2!} \right) 
$$
The corresponding unique $t$-monomial $\tilde{T}_{\sP_{k}}$ in $F(\textbf{t})  $ such that $\mathcal{D}_{\sP_k} (\tilde{T}_{\sP_{k}}) = T $ is 
$$\frac{{t_0}^{39}{t_1}^{3} {t_2}^{3} {t_3}^{2} {t_4}^{1} {t_5}^{2} {t_6}^{3}}  {{39}!3! 3! 2! 1! 2! 3!} =: \tilde{T}_{\sP_{k}} $$
Observe that $\tilde{T}_{\sP_{k}}$ can also be read off from the ${\mathcal{P}}_{k}$-graph. 
Now, in  $ \hat \sL (F(\textbf{t})) $, the corresponding term is 
$$
(-1)^7 \left(  \frac{1}{2^3 3! 2!} \right) \left( {\langle {\mathbf{\tau}}^{\mathbf{s}} \rangle}_{\sP_{k}} \right)  \mathcal{D}_{\sP_k} (\tilde{T}_{\sP_{k}})
$$
where  
$$
{\langle {\mathbf{\tau}}^{\mathbf{s}} \rangle}_{\sP_{k}} = \int_{ \overline{M}_{0,{54}}} \hat \psi_1^{1} \hat \psi_2^{1} \hat \psi_3^{1} \hat \psi_4^{2} \hat \psi_6^{2} 
\hat \psi_7^{3} \hat \psi_8^{3} \hat \psi_9^{4} \hat \psi_{10}^{5} \hat \psi_{12}^{5} \hat \psi_{13}^{6} \hat \psi_{14}^{6} \hat \psi_{15}^{6}.
$$
Again, $ {\langle {\mathbf{\tau}}^{\mathbf{s}} \rangle}_{\sP_{k}} $ can also be read off from the ${\mathcal{P}}_{k}$-graph. Also, observe that the coefficient $ (-1)^7 \left(  \frac{1}{2^3 3! 2!} \right) $ of $  \mathcal{D}_{\sP_k} $ in $:(-1)^7\frac{ \sL^7}{7!} : $ is exactly  $ (-1)^m \frac{1}{|Aut(\hat{\mathcal{P}}_{k})|} $ as claimed earlier. Now, 
$$
(-1)^7 \left(  \frac{1}{2^3 3! 2!} \right) \left( {\langle {\mathbf{\tau}}^{\mathbf{s}} \rangle}_{\sP_{k}} \right)  \mathcal{D}_{\sP_k} (\tilde{T}_{\sP_{k}})
$$
$$
= (-1)^7 \left( \frac{1}{2^3 3! 2!} \right) \left( \frac{8!}{(8-7)!} \frac{5!}{(5-5)!} \frac{2!}{(2-1)!} \frac{2!}{(2-1)!} \frac{3!}{(3-1)!}\right)  \left( {\langle {\mathbf{\tau}}^{\mathbf{s}} \rangle}_{\sP_{k}} \right) {T} \\
$$
$$
= (-1)^7 \left( \frac{1}{2^3 3! 2!} \right)  \left( \frac{8! 5! 2! 2!3! }{2!} \right) \left( {\langle {\mathbf{\tau}}^{\mathbf{s}} \rangle}_{\sP_{k}} \right) {T} 
$$
$$
=(-1)^7 \left( \frac{1}{|Aut({\hat{\mathcal{P}}}_{k})|} \right)  \left( \frac{8! 5! 2! 2!3! }{2!} \right) \left( {\langle {\mathbf{\tau}}^{\mathbf{s}} \rangle}_{\sP_{k}} \right) {T}
$$
$$
= - C_{\sP_k} \left( {\langle {\mathbf{\tau}}^{\mathbf{s}} \rangle}_{\sP_{k}} \right) {T}
$$
which gives a summand in the coefficient of $T$ in $ \hat \sL (F(\textbf{t}))- \frac{t_0^3}{3!} + 2\frac{t_0^3 t_1}{3!}$, and this summand agrees with  the summand in (\ref{pkClosedformula}) corresponding to the chosen $\sP_{k}$-graph.
\end{eg}
%
\begin{proof} (of theorem (\ref{generatingFnthm}))\\
Consider a $t$-monomial 
$\frac{{t_0}^{s_0}{t_1}^{s_1} \ldots {t_l}^{s_l}}{{s_0}!{s_1}! \ldots {s_l}!} =: T$. In
$G(\textbf{t})$, $T$ has the coefficient $ \int_{ \overline{M}_{0,{n}}} \hat \psi_1^{k_1}\hat \psi_2^{k_2} \ldots \hat \psi_n^{k_n} $. Now, we will show bijection and equality between the summands in the right hand side of (\ref{pkClosedformula}) and the summands in coefficient of the $T$ in $ \hat \sL (F(\textbf{t}))- \frac{t_0^3}{3!} + 2\frac{t_0^3 t_1}{3!}$.
\begin{enumerate}
\item Pick a $\mathcal{P}_{k}$-graph with no fork. The corresponding operator (\ref{opterm}) in $ \hat \sL $ is $1$, the first term in $ \hat \sL  $, which when applied to $ F(\textbf{t}) $ results in coefficient $1$ for $\frac{{t_0}^{s_0}{t_1}^{s_1} \ldots {t_l}^{s_l}}{{s_0}!{s_1}! \ldots {s_l}!}$ in  $ \hat \sL (F(\textbf{t}))$, which agrees with the corresponding summand in (\ref{pkClosedformula}).
\item Pick a $\mathcal{P}_{k}$-graph with one fork. Without loss of generality, assume a $\mathcal{P}_{k}$-graph with $ k_1, k_2 $ on the only fork, with $ k_1, k_2 $ not simultaneously $0$.
\end{enumerate}
 
Case 1 : $ k_1 \neq k_2$.
Then, the corresponding operator (\ref{opterm}) is $ t_{k_1} t_{k_2} \frac{\partial}{\partial t_{k_1+k_2-1}} $. In $\hat \sL $, this term has coefficient $-1 $. The term in $ F(\textbf{t}) $ that it operates on to produce  $\frac{{t_0}^{s_0}{t_1}^{s_1} \ldots {t_l}^{s_l}}{{s_0}!{s_1}! \ldots {s_l}!}$ has t-monomial: \\
$$\frac{{t_0}^{s_0} \ldots {t_{k_1}}^{s_{k_1-1}} \ldots {t_{k_2}}^{s_{k_2-1}} \ldots {t_{k_l}}^{s_{k_l}}}{{s_0}! \ldots {s_{k_1-1}}! \ldots {s_{k_2-1}}! \ldots {s_l}!} t_{k_1+k_2-1}.$$ 

The result of applying $-t_{k_1} t_{k_2} \frac{\partial}{\partial t_{k_1 + k_2-1}}$ in $\hat{\sL}$ to $F(\textbf{t})$ is the following:  
$$
-t_{k_1} t_{k_2} \frac{\partial}{\partial t_{k_1 + k_2-1}} (\frac{{t_0}^{s_0} \ldots {t_{k_1}}^{s_{k_1-1}} \ldots {t_{k_2}}^{s_{k_2-1}} \ldots {t_{k_l}}^{s_{k_l}}}{{s_0}! \ldots {s_{k_1-1}}! \ldots {s_{k_2-1}}! \ldots {s_l}!} t_{k_1+k_2-1})\\
$$
$$
= - s_{k_1} s_{k_2}\frac{{t_0}^{s_0}{t_1}^{s_1} \ldots {t_l}^{s_l}}{{s_0}!{s_1}! \ldots {s_l}!}. 
$$  
So, the summand contributed by this operator to the coefficient of $\frac{{t_0}^{s_0}{t_1}^{s_1} \ldots {t_l}^{s_l}}{{s_0}!{s_1}! \ldots {s_l}!}$ in $\hat{\sL}(F(\textbf{t}))$ is  
$$
- s_{k_1} s_{k_2} \int_{ \overline{M}_{0,(n-1)}} \psi_{\bullet_1}^{k_1+k_2-1}  \psi_{{3}}^{k_{{3}}}\ldots ... \psi_{r}^{k_{r}} 
$$
and $C_{\sP_k} = s_{k_1} s_{k_2} $ is the number of $ \mathcal{P} $-graphs that map to this $\mathcal{P}_{k}$-graph. This summand also agrees with the corresponding summand in (\ref{pkClosedformula}).
Case 2 : $k_1 = k_2$. In this case we get the corresponding operator (\ref{opterm}) as 
$ t_{k_1}^2  \frac{\partial}{\partial t_{2k_1-1}} $. The coefficient of this term in $\hat \sL$ is $-\frac{1}{2}$.
When $-\frac{1}{2} t_{k_1}^2 \frac{\partial}{\partial t_{2k_1-1}} $ is applied to $ F(\textbf{t}) $, the only terms that produces $\frac{{t_0}^{s_0}{t_1}^{s_1} \ldots {t_l}^{s_l}}{{s_0}!{s_1}! \ldots {s_l}!}$ is  
$$\frac{{t_0}^{s_0}{t_1}^{s_1} \ldots {t_{k_1}}^{s_{k_1-2}} \ldots {t_{k_l}}^{s_{k_l}} }  {{s_0}! \ldots {s_{k_1-2}}! \ldots {s_l}!} t_{2k_1-1} 
$$
Now,
$$
- \frac{1}{2} t_{k_1}^2 \frac{\partial}{\partial t_{2k_1-1}} (\frac{{t_0}^{s_0}{t_1}^{s_1} \ldots {t_l}^{s_l}}{{s_0}!{s_1}! \ldots {s_l}!}\frac{{t_0}^{s_0}{t_1}^{s_1} \ldots {t_{k_1}}^{s_{k_1-2}} \ldots {t_{k_l}}^{s_{k_l}} }  {{s_0}! \ldots {s_{k_1-2}}! \ldots {s_l}!} t_{2k_1-1})\\
$$$$
= - \frac{1}{2} s_{k_1} (s_{k_1}-1) \frac{{t_0}^{s_0}{t_1}^{s_1} \ldots {t_l}^{s_l}}{{s_0}!{s_1}! \ldots {s_l}!} 
$$
So, the the summand contributed by this operator to the coefficient of $\frac{{t_0}^{s_0}{t_1}^{s_1} \ldots {t_l}^{s_l}}{{s_0}!{s_1}! \ldots {s_l}!}$ in $\hat{\sL}(F(\textbf{t}))$ is   
$$
- \frac{1}{2} s_{k_1} (s_{k_1}-1) \int_{ \overline{M}_{0,(n-1)}} \psi_{\bullet_1}^{2k_1-1}  \psi_{{3}}^{k_{{3}}}\ldots ... \psi_{r}^{k_{r}} 
$$
and $C_{\sP_k} = \frac{1}{2} s_{k_1} (s_{k_1}-1) $ is the number of $ \mathcal{P} $-graphs that map to this kind of $\mathcal{P}_{k}$-graph. This summand also agrees with the corresponding summand in (\ref{pkClosedformula}). 

In both cases, the terms contribute $- C_{\sP_k} \int_{ \overline{M}_{0,(n-1)}} \psi_{\bullet_1}^{k_{1}+k_{2}-1} \psi_{{3}}^{k_{{3}}}\ldots ... \psi_{r}^{k_{r}} 
$ to the coefficient of t-term $\frac{{t_0}^{s_0}{t_1}^{s_1} \ldots {t_l}^{s_l}}{{s_0}!{s_1}! \ldots {s_l}!}$ in $ \hat{\sL} (F(\textbf{t})) $. So, we get both summands in (\ref{pkClosedformula}) corresponding to two $\sP_k$-graphs with one fork. Also, observe that if $k_1 = k_2 = 0$, the term $- t_{0}^2 \frac{\partial}{\partial t_{-1}} $ contributes nothing, and there is also no corresponding summand in (\ref{pkClosedformula}).

Now consider a $\mathcal{P}_{k}$-graph with $m$ forks. Without loss of generality, let the $k_i$'s on the forks be $ \lbrace k_1, k_2, \ldots, k_{2m} \rbrace $ as shown in figure below. Let $l_{i}$ be the number of times a given $k_i$ appears on any fork on the $\mathcal{P}_{k}$-graph, and let $ j_1, j_2, \ldots, j_f $ be the number of forks with the same set of $k_i$'s on them; and let $d$ be the number of forks with both $k_i$'s the same on that fork.
\begin{figure}
\begin{center}
\includegraphics[width=2.7in]{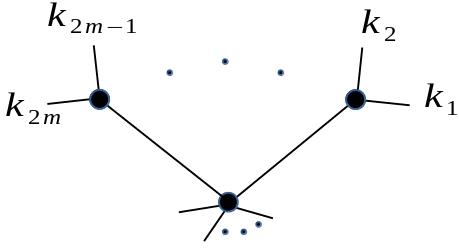}\\
\end{center}
\caption{$\mathcal{P}_{k}$-graph with $m$ number of forks}
\label{figProof4.1a}
\end{figure} 
Then the corresponding operator (\ref{opterm}) is
$$ 
t_{k_1} t_{k_2} \ldots  t_{k_{2m-1}} t_{k_{2m}} \frac{\partial}{\partial t_{k_1+k_2-1}}  \ldots \frac{\partial}{\partial t_{k_{2m-1}+t_{k_{2m}}-1}} := \mathcal{D}_{\mathcal{P}_{k}}.
$$
The coefficient of this term in $\hat \sL$ as a summand in  
 $:(-1)^m\frac{ \sL^m}{m!}: $ is given by
$$
 (-1)^m \left( \frac{ 1}{2^m m!} \right)  \left( {2^{(m-d)}} {{m}\choose{l_1,l_2,\ldots,l_n}}\right)\\
 = (-1)^m \left(  \frac{1}{2^d l_1! l_2! \ldots,l_n!} \right). 
$$
The corresponding term in $F(\textbf{t})  $ is 
$$\frac{{t_0}^{s_0}\ldots {t_{k_1}}^{\left( {s_{k_1}- l_{k_1}}\right)}  \ldots {t_{k_{2m}}}^{\left( {s_{k_{2m}}- l_{k_{2m}}}\right)} }  {{s_0}! \ldots {\left( {s_{k_1}- l_{k_1}}\right)}!  \ldots {\left( {s_{k_{2m}}- l_{k_{2m}}}\right)} ! \ldots {s_{k_{n}} }!}
t_{k_1+k_2-1} \ldots t_{k_{2m-1}+k_{2m}-1}
=: \tilde{T}_{\sP_{k}}. $$
In  $ \hat \sL (F(\textbf{t})) $, the corresponding term is 
$$
(-1)^m \left(  \frac{1}{2^d l_1! l_2! \ldots,l_n!} \right)  \left( \langle {\mathbf{\tau}}^{\mathbf{s}} \rangle_{\sP_{k}} \right)  \mathcal{D}_{\sP_k} (\tilde{T}_{\sP_{k}})
$$
where  
$\langle {\mathbf{\tau}}^{\mathbf{s}} \rangle_{\sP_{k}}$ is the $\psi$-monomial on $\overline{M}_{0,{(n-m)}}$ that appears as coefficient of $ \tilde{T}_{\sP_{k}} $ in $ F(\textbf{t}) $. 
Observe that the coefficient $ (-1)^m \left(  \frac{1}{2^d l_1! l_2! \ldots,l_n!} \right)  $ of $  \mathcal{D}_{\sP_k} $ in $:(-1)^m\frac{ \sL^m}{m!}: $ is exactly  $ (-1)^m \frac{1}{|Aut(\hat{\mathcal{P}}_{k})|} $ as claimed earlier. Now, 
$$
(-1)^m \left(  \frac{1}{2^d l_1! l_2! \ldots,l_n!} \right)  \left( \langle {\mathbf{\tau}}^{\mathbf{s}} \rangle_{\sP_{k}} \right)  \mathcal{D}_{\sP_k} (\tilde{T}_{\sP_{k}})
$$
$$
= (-1)^m \left(  \frac{1}{2^d l_1! l_2! \ldots,l_n!} \right)  \left( \frac{s_{1}!}{\left( s_{1}-l_{1}\right)! } \frac{s_{2}!}{\left( s_{2}-l_{2}\right)! } \ldots \frac{s_{n}!}{\left( s_{n}-l_{n}\right)! } \right)  \left( \langle {\mathbf{\tau}}^{\mathbf{s}} \rangle_{\sP_{k}} \right) {T} \\
$$
$$
= (-1)^m \left( \frac{1}{|Aut({\hat{\mathcal{P}}}_{k})|} \right)  \left( \frac{s_{1}!}{\left( s_{1}-l_{1}\right)! } \frac{s_{2}!}{\left( s_{2}-l_{2}\right)! } \ldots \frac{s_{n}!}{\left( s_{n}-l_{n}\right)! } \right)  \left( \langle {\mathbf{\tau}}^{\mathbf{s}} \rangle_{\sP_{k}} \right) {T} \\
$$
$$
= (-1)^m C_{\sP_k} \left( \langle {\mathbf{\tau}}^{\mathbf{s}} \rangle_{\sP_{k}} \right) {T}
$$
which gives a summand in the coefficient of $T$ in $ \hat \sL (F(\textbf{t})) $ and this agrees with  the summand in (\ref{pkClosedformula}) corresponding to the chosen $\sP_{k}$-graph.

So, one direction is proved. To show bijection in the other direction, we pick a summand in the coefficient of $ \frac{{t_0}^{s_0}{t_1}^{s_1} \ldots {t_l}^{s_l}}{{s_0}!{s_1}! \ldots {s_l}!} = T$ in $ \hat \sL (F(\textbf{t})) $ that comes from term $ :\frac{(-1)^{m}}{m!} {\sL}^m :$. Let this summand come from the following summand in $ :\frac{(-1)^{m}}{m!} {\sL}^m$ :
$$
(-1)^m \left(  \frac{1}{2^d l_1! l_2! \ldots,l_n!} \right)  \mathcal{D} 
$$
where 
$$ 
\mathcal{D} = t_{k_1} t_{k_2} \ldots  t_{k_{2m-1}} t_{k_{2m}} \frac{\partial}{\partial t_{k_1+k_2-1}}  \ldots \frac{\partial}{\partial t_{k_{2m-1}+t_{k_{2m}}-1}}.
$$
Now, $ \mathcal{D} = \mathcal{D}_{\sP_{k}} $, with ${\sP_{k}}$ as in the figure (\ref{figProof4.1a}).
Now, $ \langle {\mathbf{\tau}}^{\mathbf{s}} \rangle_{\sP_{k}} $ is uniquely determined by $ \tilde{T}_{\sP_{k}} $, and $ \tilde{T}_{\sP_{k}} $ is uniquely determined by the relation
$$
\mathcal{D}_{\sP_k} (\tilde{T}_{\sP_{k}}) = T.
$$
Then the chosen summand in the coefficient of $T$ in $ \hat \sL (F(\textbf{t})) $ is 
$$
= (-1)^m C_{\sP_k} \left( \langle {\mathbf{\tau}}^{\mathbf{s}} \rangle_{\sP_{k}} \right). 
$$
The $\sP_k$-graph thus determined also determines the same summand in (\ref{pkClosedformula}).

Finally, $ \hat \sL (F(\textbf{t})) $ also contains two monomials with degree $\leq 4$, and these are  
$ \frac{t_0^3}{3!} $ and $- 2\frac{t_0^3 t_1}{3!}$. We subtract these monomials as the Hassett Spaces for the corresponding cases of three and four pointed rational curves with all weights $\frac{1}{2}$ are empty, as these rational curves are not $\mathcal{A}$-stable.
\end{proof}

\section*{Acknowledgements} 
The author would like to thank the anonymous reviewer for constructive criticism and helpful suggestions.

\bibliographystyle{plain}
\bibliography{proposal.bib}
\end{document}